\documentclass[11pt]{amsart}
\usepackage{amsmath}
\usepackage{amsfonts}
\usepackage{amssymb}

\setlength{\oddsidemargin}{+5pt}
\setlength{\evensidemargin}{+5pt}
\setlength{\textwidth}{150mm}


\newtheorem{teo}{Theorem}[section]
\newtheorem{lem}[teo]{Lemma}
\newtheorem{cor}[teo]{Corollary}
\newtheorem{prop}[teo]{Proposition}

\newtheorem{defi}[teo]{Definition}
\newtheorem{oss}[teo]{Remark}

\newcommand{\dive}{\mathrm{div}}

\newcommand{\ud}{\mathrm{d}}

\renewcommand{\phi}{\varphi}
\newcommand{\R}{{\mathbb{R}}}

\newcommand{\bbR}{{\mathbb{R}}}

\def \R {{\mathbb {R}}}

\newcommand{\bbS}{\mathbb{S}}

\newcommand{\average}{{\mathchoice {\kern1ex\vcenter{\hrule height.4pt
width 6pt
depth0pt} \kern-9.7pt} {\kern1ex\vcenter{\hrule height.4pt width 4.3pt
depth0pt}
\kern-7pt} {} {} }}




\title[Symmetry results for fibered systems]{Symmetry results for stable and monotone solutions to fibered systems of PDEs}
\author{Serena Dipierro, Andrea Pinamonti}
\address{Departamento de Ingenier\'ia Matem\'atica, Universidad de Chile, Casilla 170 Correo 3, Santiago, Chile}
\email{sdipierro@dim.uchile.cl}
\address{Dipartimento di Matematica, Universit\`a di Padova, Via Trieste 63, Padova, Italia}
\email{pinamonti@science.unitn.it}

\begin{document}
\vskip .2truecm

\keywords{Elliptic systems, fibered media, monotone solutions, stable solutions, phase separation, Poincar\'{e}-type inequality.  \vspace{1mm}}

\thanks{The first author has been supported by FIRB ``Project Analysis and Beyond'' and by CAPDE-Anillo ACT-125. The second author has been supported by MIUR, Italy, GNAMPA of INDAM and by Fondazione CaRiPaRo Project ``Nonlinear Partial Differential Equations: models, analysis, and control-theoretic problems``}

\begin{abstract}
We study the symmetry properties for solutions of elliptic systems of the type
\begin{eqnarray*}
\left\{ 
\begin{array}{ll} 
 -\dive(a_1(x,|\nabla u^1|(X))\nabla u^1(X))=F_{1}(x, u^1(X),\ldots, u^n(X)),\\ 
\vdots\\
-\dive(a_n(x,|\nabla u^n|(X))\nabla u^n(X))=F_{n}(x, u^1(X),\ldots, u^n(X)),  
 \end{array} 
\right.
\end{eqnarray*} 
where~$x\in \R^m$ with $1\leq m< N$, $X=(x,y)\in \R^m\times \R^{N-m}$, 
and $F_{1},\ldots, F_{n}$ are the derivatives with respect to $\xi^1,\ldots, \xi^n$ 
of some $F=F(x,\xi^1,\ldots, \xi^n)$ such that for any $i=1,\ldots, n$ and any fixed $(x,\xi^1,\ldots, \xi^{i-1},\xi^{i+1},\ldots, \xi^n)\in \R^m\times \R^{n-1}$ 
the map $\xi^i\to F(x,\xi^1,\ldots,\xi^i,\ldots, \xi^n)$ belongs to $C^2(\R)$. 
We obtain a Poincar\'e-type formula for the solutions of the system and we use it to prove a symmetry result both for stable and for monotone solutions.
\end{abstract}

\maketitle

\section{Introduction}
In this paper we deal with symmetry results for solutions to the following system of partial differential equations defined in an open subset $\Omega$ of $\R^N$
\begin{equation}
\left\{
\begin{aligned}
\label{sistema}
-\dive&(a_1(x,|\nabla u^1|(X))\nabla u^1(X))=F_{1}(x, u^1(X),\ldots, u^n(X)),\\
&\vdots\\
-\dive&(a_n(x,|\nabla u^n|(X))\nabla u^n(X))=F_{n}(x, u^1(X),\ldots, u^n(X)).
\end{aligned}
\right.
\end{equation}
Here $x\in \R^m$ with $1\leq m< N$, $X=(x,y)\in \R^m\times \R^{N-m}$, 
and $F_{1},\ldots, F_{n}$ are the derivatives with respect to $\xi^1,\ldots, \xi^n$ 
of some $F=F(x,\xi^1,\ldots, \xi^n)$ such that, for any $i=1,\ldots, n$ and any fixed $(x,\xi^1,\ldots, \xi^{i-1},\xi^{i+1},\ldots, \xi^n)\in \R^m\times \R^{n-1}$, 
the map $\xi^i\to F(x,\xi^1,\ldots,\xi^i,\ldots, \xi^n)$ belongs to $C^2(\R)$. 
We also assume that
\begin{itemize}
	\item $a_i\in L^{\infty}(\R^m\times [\alpha_{-},\alpha_{+}])$, for any $\alpha_{+}>\alpha_{-}>0$, $i=1,\ldots,n$;
	\item for any fixed $t\in (0,+\infty)$ and any $i=1,\ldots, n$, $\inf_{x\in \R^m} a_i(x,t)>0$;
	\item for any fixed $x\in \R^m$ and any $i=1,\ldots, n$, the map $t\to a_i(x,t)$ is $C^1$ on $(0,+\infty)$.
\end{itemize}
The physical motivation for \eqref{sistema} comes from ``fibered'', or ``stratified'' media: namely, the medium, say $\Omega\subset\bbR^N$, is
nonhomogeneous, but this nonhomogeneity only occurs in lower dimensional slices (here, the medium is supposed to be homogeneous with respect to $y\in\bbR^{N-m}$ and nonhomogeneous with respect to $x\in \bbR^m$).

Systems similar to \eqref{sistema} have been studied in~\cite{BLWZ}. 
Precisely, the authors considered the following system
\begin{eqnarray}\label{syst}
\left\{ 
\begin{array}{ll} 
    \Delta u   = uv^2,         \\
    \Delta v = vu^2, \\
    u, v>0,
 \end{array} 
\right. 
\end{eqnarray} 
which arises in phase separation for multiple states Bose-Einstein 
condensates. 
They proved that there exists a solution to~\eqref{syst} in~$\bbR$, 
which is nondegenerate and reflectionally symmetric, 
namely that there exists~$x_0\in\bbR$ such that~$u(x-x_0)=v(x-x_0)$. 
Moreover, they obtained a result that may be seen as the analogue of a famous 
conjecture of De Giorgi for problem~\eqref{syst} in dimension~$2$, 
that is they proved that monotone solutions of~\eqref{syst} in~$\R^2$ (see Definition \ref{mon} below) have one-dimensional symmetry 
under the additional growth condition
\begin{equation}\label{growth}
  u(x)+v(x) \leq C(1+|x|). 
\end{equation}
On the other hand, in~\cite{NTTV}, it has been proved that 
the linear growth is the lowest possible for solutions to~\eqref{syst}; 
in other words, if there exists~$\alpha\in (0,1)$ such that 
$$
   u(x)+v(x) \leq C(1+|x|)^{\alpha},
$$
then~$u=v\equiv 0$. 

In~\cite{BSWW} the authors replaced the monotonicity condition by 
the stability of the solutions (which is a weaker assumption), 
showing that the above mentioned one-dimensional symmetry still holds in~$\R^2$. 
Moreover, they proved that there exist solutions to~\eqref{syst} 
which do not satisfy the growth condition~\eqref{growth}, 
constructing solutions with polynomial growth.  

We mention the paper~\cite{Wa}, 
where the author showed that, for any~$n\geq 2$, 
a solution to~\eqref{syst} which is a local minimizer and satisfies the growth condition~\eqref{growth} 
has one-dimensional symmetry. 

In~\cite{FG} it is proved that the symmetry result stated in~\cite{BLWZ} holds
also for a more general class of nonlinearities. 

Finally, in~\cite{Di}, the author considered a class of quasilinear (possibly degenerate) elliptic systems in~$\R^n$ and proved that, under suitable assumptions, the solutions have one-dimensional symmetry, 
showing that the results obtained in~\cite{BLWZ, BSWW, FG} hold in a more general setting. We also refer the reader to \cite{DP}, where symmetry results for systems driven by non local operators are studied.

Results similar to the ones described above are also well-understood in the case of one equation. In particular, in low dimensions, De Giorgi conjecture on the flatness 
of level sets of standard phase transition (\cite{degiorgi}) has been proved, 
see~\cite{AAC, AC, BCN, GG, GG1}. 
Later, Savin in~\cite{Sav} showed that the conjecture is true up to dimension~$8$ 
under an additional hypothesis on the behaviour of the solution at infinity. 
Finally, in dimension~$n\geq9$ Del Pino, Kowalczyk and Wei constructed a solution 
to the Allen-Cahn equation which is monotone in one direction but not one-dimensional, 
see~\cite{DKW}. 

It is also worth noticing that an analogous of the De Giorgi conjecture has been studied for more general operators. In particular, we mention~\cite{FSV}, 
where quasilinear operators of p-Laplacian and curvature type are considered, 
and~\cite{CSM, SV}, where the authors proved a similar De Giorgi-type result for an equation involving the fractional Laplacian in dimension $n=2$; 
see also~\cite{CaCi,CSir, CSir2} for further extensions.

\bigskip

First of all, we give the following definition:
\begin{defi}\label{weak}
An $n-$uple $(u^1,\ldots, u^n)$ is said to be a weak solution of (\ref{sistema}) if, 
for any $\psi=(\psi^1,\ldots, \psi^n)\in C^{\infty}_{c}(\Omega,\R^n)$,
\begin{align}\label{weak1}
		\int_{\Omega}\left\langle a_i(x,|\nabla u^i|(X))\nabla u^i(X),\nabla\psi^i(X) \right\rangle\ud X=\int_{\Omega} F_{i}(x,u^1,\ldots, u^n)\, \psi^i(X)\ud X, \quad i=1,\ldots, n.
\end{align}
\end{defi}
In order to state our results we start pointing out our assumptions. In particular, from now on we will always assume that every weak solution $(u^1,\ldots, u^n)$ of (\ref{sistema}) is such that\footnote{At the end of this paper we present some explicit cases in which these assumptions are fulfilled.}
\begin{equation}\begin{split}\label{regassump} 
& u^i\in C^1(\Omega)\cap C^2(\Omega\cap \{\nabla u^i\neq 0\})\cap L^{\infty}(\Omega) \\
& {\mbox{and\ }} \nabla u^i\in L^{\infty}(\Omega,\R^N)\cap W^{1,2}_{loc}(\Omega,\R^N), \quad i=1,\ldots,n.
\end{split}\end{equation}
Moreover, we will also assume that, for every $k=1,\ldots, n$, the map 
$$\mathcal{A}^k:\R^m\times (\R^N\setminus \{0\})\longrightarrow Mat(N\times N)$$
defined by\footnote{Here, as usual, $Mat(N\times N)$ denotes the vector space of real $N\times N$ matrices.}
\begin{align*}
	&\mathcal{A}_{ij}^k=\mathcal{A}_{ij}^k(x,\eta):=a_k(x,|\eta|)\delta_{ij}+\frac{\partial a_k}{\partial t}(x,|\eta|)\frac{\eta_i\eta_j}{|\eta|}, \qquad 1\leq i,j\leq N
\end{align*}
is such that\footnote{Let us observe that condition (\ref{Alim}) is implied if, for example, $\frac{\partial a_k}{\partial t}\in L^{\infty}(\bbR^m\times [\alpha-,\alpha+])$,  for all $\alpha_{+}>\alpha_{-}>0$.}
\begin{equation}\begin{split}\label{Alim}
&(x,y)=X\to \mathcal{A}^i(x,\nabla u^i(X))
{\mbox{\ belongs to $L^{\infty}(\{\nabla u^i\neq 0\}\cap B_R)$}} \\
&{\mbox{for any $R>0$ and any $i=1,\ldots, n$.}}
\end{split}\end{equation}
\vspace{5pt}

The following definition was introduced in \cite{FG}.
\begin{defi}\label{mon}
A solution $(u^1,\ldots, u^n)$ of (\ref{sistema}) is said to be $F-$monotone if 
\begin{enumerate}
\item[i)] for every $i\in\{1,\ldots, n\}$, $\partial_{y_{N-m}}u^i\neq 0$ in $\Omega$,
\item[ii)] for $i<j$, we have $F_{ij}\, \partial_{y_{N-m}}\, u^i\partial_{y_{N-m}}u^j\geq 0$.
\end{enumerate}
\end{defi}
As it is customary in this setting, we recall the notion of stability:
\begin{defi}\label{defstab}
A solution $(u^1,\ldots, u^n)$ of (\ref{sistema}) is said to be stable if 
\begin{equation}\begin{split}\label{stab}
& \sum_{i=1}^n\int_{\Omega}\left\langle \mathcal{A}^i(x,\nabla u^i(X))\nabla \psi^i(X), \nabla \psi^i(X)\right\rangle\ud X \\
&\qquad\qquad -\sum_{i,j=1}^n\int_{\Omega} F_{ij}(x,u^1,\ldots, u^n)\, \psi^i(X)\, \psi^j(X)\ud X\geq 0,
\end{split}\end{equation}
for any~$\psi=(\psi^1,\ldots, \psi^n)\in C^{\infty}_c(\Omega,\R^n)$. 
\end{defi}
Let us note that~\eqref{sistema} represents the Euler-Lagrange system associated to a suitable energy functional $I$ (see the appendix). 
In particular, the notion of stability given in Definition~\ref{defstab} states that $I$ has positive (formal) second variation (we refer to \cite{AAC,AC,FSV} for more details). 

According to \cite{CVal,FSV,SZ1}, for every fixed $x\in\R^m$ and $c\in\R$, we define
\[H_u=H_{u,x,c}:=\{y\in \R^{N-m}\ |\ u(x,y)=c\} \]
and 
\[L_u=L_{u,x,c}:=\{y\in H_u\ |\ \nabla_y u(x,y)\neq 0\}. \]
We also define
\begin{align*}
	&\mathcal{R}_u:=\{(x,y)\in\Omega\ |\ \nabla_y u(x,y)\neq 0\},
	&\mathcal{S}_u:=\sum_{i=1}^m\sum_{j=1}^{N-m}(u_{x_iy_j})^2-|\nabla_x|\nabla_y u||^2,\\
	&\mathcal{T}_u:=\sum_{j=1}^{N-m}\left\langle\nabla u,\nabla u_{y_j}\right\rangle^2-\left\langle \nabla u,\nabla|\nabla_y u|\right\rangle^2,
  &\mathcal{U}_u:=\sum_{j=1}^{N-m}|\nabla u_{y_j}|^2-|\nabla|\nabla_y u||^2.
\end{align*}
We recall that the tangential gradient along $L_u$, $\nabla_L$, is defined for every $\bar y\in L_u$ and any $G:\R^{N-m}\longrightarrow \R$ smooth in the vicinity of $\bar y$ as
	\[\nabla_L G(\bar y):=\nabla_y G(\bar y)-\left\langle\nabla_y G(\bar y), \frac{\nabla_y u(x,\bar y)}{|\nabla_y u(x,\bar y)|}\right\rangle\frac{\nabla_y u(x,\bar y)}{|\nabla_y u(x,\bar y)|},
\]
and since $L_u$ is a smooth $(N-m-1)-$manifold we define, for every $y\in L_u$, 
the length of the second fundamental form by
	\[\mathcal{K}_u(x,y):=\sqrt{\sum_{j=1}^{N-m-1}k_j^2(x,y)},
\]
where $k_{1,u}(x,y),\ldots, k_{N-m-1,u}(x,y)$ are the principal curvatures of $L_u$.

\vspace{5pt}

We are now in position to state our main results. 
We establish first a geometric inequality, which involves the tangential gradients 
and the curvatures of the level sets of the solution. 
This inequality holds in every open set~$\Omega\subset\bbR^N$.
\begin{teo}\label{ineq} 
Let $(u^1,\ldots, u^n)$ be a weak stable solution of (\ref{sistema}) satisfying (\ref{regassump}).

Then, for each $\psi=(\psi^1,\ldots,\psi^n)\in C^{\infty}_c(\Omega,\R^n)$, we have 
\begin{align}\label{stat1}
&\sum_{k=1}^n \int_{\mathcal{R}_{u^k}}\left(\sum_{j=1}^{N-m} \left\langle \mathcal{A}^k\nabla u^k_{y_j}, \nabla u^k_{y_j}\right\rangle-\left\langle \mathcal{A}^k\nabla|\nabla_y u^k|,\nabla|\nabla_y u^k|\right\rangle\right)(\psi^k)^2 \\
\nonumber
&\quad -\sum_{k,j=1, j\neq k}^n\int_{\Omega} F_{kj}\left((\psi^k)^2\left\langle \nabla_y u^j, \nabla_y u^k\right\rangle-\psi^j\psi^k|\nabla_y u^k||\nabla_y u^j|\right)\\
	\nonumber
\leq & \int_{\Omega}\sum_{k=1}^n\left\langle \mathcal{A}^k\nabla\psi^k,\nabla\psi^k\right\rangle|\nabla_y u^k|^2.
\end{align}
Moreover,
\begin{align*}
& \sum_{k=1}^n\int_{R_{u^k}}\left[a_k(x,|\nabla u^k|)(\mathcal{S}_{u^k}+\mathcal{K}_{u^k}^2|\nabla_y u^k|^2+|\nabla_L|\nabla_y u^k||^2)+\frac{\frac{\partial{a}_k}{\partial t}(x,|\nabla u^k|)}{|\nabla u^k|}\mathcal{T}_{u^k}\right](\psi^k)^2 \\
	\nonumber
\leq & \int_{\Omega}\sum_{k=1}^n\left\langle \mathcal{A}^k\nabla\psi^k,\nabla\psi^k\right\rangle|\nabla_y u^k|^2 \\ 
\nonumber 
&\quad -\sum_{k,j=1, j\neq k}^n\int_{\Omega} F_{kj}\left((\psi^k)^2|\nabla_y u^k||\nabla_y u^j|-\psi^j\psi^k\left\langle \nabla_y u^i, \nabla_y u^k\right\rangle\right).
\end{align*}
\end{teo}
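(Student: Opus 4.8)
The plan is to exploit the fact that the coefficients $a_k=a_k(x,|\nabla u^k|)$ have no explicit dependence on the fiber variables $y=(y_1,\dots,y_{N-m})$, so each equation of \eqref{sistema} may be differentiated along these directions and then coupled with the stability inequality. Concretely, using the $C^2$ regularity on $\{\nabla u^k\neq0\}$ and $\nabla u^k\in W^{1,2}_{loc}$ granted by \eqref{regassump}, for every $j\in\{1,\dots,N-m\}$ the function $u^k_{y_j}$ solves, weakly on $\{\nabla u^k\neq0\}$, the linearized equation
\[
-\dive\big(\mathcal{A}^k(x,\nabla u^k)\,\nabla u^k_{y_j}\big)=\sum_{l=1}^n F_{kl}(x,u^1,\dots,u^n)\,u^l_{y_j},
\]
because $\partial_{y_j}\big(a_k(x,|\nabla u^k|)\nabla u^k\big)=\mathcal{A}^k(x,\nabla u^k)\nabla u^k_{y_j}$ and $\partial_{y_j}F_k=\sum_l F_{kl}\,u^l_{y_j}$; assumption \eqref{Alim} is precisely what makes this weak formulation meaningful.

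Next I would fix $\psi=(\psi^1,\dots,\psi^n)\in C^\infty_c(\Omega,\R^n)$ and produce two relations. First, testing the $j$-th linearized equation with $u^k_{y_j}(\psi^k)^2$ and summing over $j$ and $k$ gives the identity
\[
\sum_k\int\sum_j\langle\mathcal{A}^k\nabla u^k_{y_j},\nabla u^k_{y_j}\rangle(\psi^k)^2+2\sum_k\int\psi^k\sum_j u^k_{y_j}\langle\mathcal{A}^k\nabla u^k_{y_j},\nabla\psi^k\rangle=\sum_{k,l}\int F_{kl}\langle\nabla_y u^l,\nabla_y u^k\rangle(\psi^k)^2 .
\]
Second, I would insert in \eqref{stab} the competitor obtained from $\psi$ by replacing each $\psi^k$ with $|\nabla_y u^k|\,\psi^k$ (extended by $0$ outside $\mathcal{R}_{u^k}$) and expand every term $\langle\mathcal{A}^k\nabla(|\nabla_y u^k|\psi^k),\nabla(|\nabla_y u^k|\psi^k)\rangle$ into a level-set piece, a mixed piece and a cut-off piece. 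The algebraic heart of the matter is that $|\nabla_y u^k|^2=\sum_j(u^k_{y_j})^2$ yields $|\nabla_y u^k|\,\nabla|\nabla_y u^k|=\sum_j u^k_{y_j}\,\nabla u^k_{y_j}$, so by bilinearity of $\mathcal{A}^k$ the mixed piece coming from the stability expansion coincides with the mixed term in the displayed identity. Combining this identity with \eqref{stab}, the mixed terms cancel, the diagonal contributions $F_{kk}$ cancel too (since $\langle\nabla_y u^k,\nabla_y u^k\rangle=|\nabla_y u^k|^2$), and rearranging — with an eye on the sign of the off-diagonal coupling and on $F_{kl}=F_{lk}$ — leaves exactly \eqref{stat1}.

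For the second inequality I would rewrite the integrand of \eqref{stat1}. From $\mathcal{A}^k_{ab}=a_k\delta_{ab}+\tfrac{\partial_t a_k}{|\nabla u^k|}(\nabla u^k)_a(\nabla u^k)_b$ one gets $\langle\mathcal{A}^k V,V\rangle=a_k|V|^2+\tfrac{\partial_t a_k}{|\nabla u^k|}\langle\nabla u^k,V\rangle^2$ for every vector $V$, so that integrand equals $a_k\,\mathcal{U}_{u^k}+\tfrac{\partial_t a_k}{|\nabla u^k|}\,\mathcal{T}_{u^k}$. Splitting $\nabla=(\nabla_x,\nabla_y)$ gives $\mathcal{U}_{u^k}=\mathcal{S}_{u^k}+\big(\sum_{j,l}(u^k_{y_jy_l})^2-|\nabla_y|\nabla_y u^k||^2\big)$, and the Sternberg--Zumbrun geometric identity (see \cite{SZ1,FSV}), applied to $y\mapsto u^k(x,y)$ at the points of $L_{u^k}$, turns the last bracket into $\mathcal{K}_{u^k}^2|\nabla_y u^k|^2+|\nabla_L|\nabla_y u^k||^2$. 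Substituting this and transferring the off-diagonal coupling term to the right-hand side produces the second displayed inequality.

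The main obstacle is not this algebra but the admissibility of the non-smooth competitors. One has to (i) extend \eqref{stab} and the weak form of the linearized equation from $C^\infty_c$ to competitors in $W^{1,2}_c(\Omega)$, which is legitimate by the $L^\infty$ bounds in \eqref{regassump} and \eqref{Alim} together with mollification, and, more seriously, (ii) cope with the critical set $\{\nabla u^k=0\}$, on which $u^k$ is only $C^1$ and the operator degenerates: the standard device, in the spirit of \cite{FSV,SZ1}, is to replace $|\nabla_y u^k|$ by $(|\nabla_y u^k|-\varepsilon)_+$, whose support sits strictly inside $\{\nabla u^k\neq0\}$, to prove the inequality there, and then to let $\varepsilon\to0$ while checking that the boundary terms vanish because $\nabla_y u^k\to0$ on $\partial\mathcal{R}_{u^k}$. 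Once this limiting procedure is set up, both inequalities follow from the bookkeeping sketched above.
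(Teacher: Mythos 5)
Your proposal is correct and follows essentially the same route as the paper: differentiate the system in the $y_j$ variables to get the linearized equations, test them with $u^k_{y_j}(\psi^k)^2$, test the stability inequality with $|\nabla_y u^k|\psi^k$, cancel the mixed and diagonal $F_{kk}$ terms, and then convert the integrand via the Sternberg--Zumbrun type identity (Lemma~\ref{ugugeom}) for the second display. The only (inessential) difference is at the technical level: where you propose an $\varepsilon$-truncation $(|\nabla_y u^k|-\varepsilon)_+$ to avoid the degenerate set, the paper instead extends the weak formulations to $W^{1,2}_0$ test functions by density and invokes Stampacchia's theorem to get $\nabla u^k_{y_j}=\nabla|\nabla_y u^k|=0$ a.e.\ on $\{\nabla_y u^k=0\}$, which localizes the integrals to $\mathcal{R}_{u^k}$ directly.
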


Next, we state our symmetry results both for stable and for monotone solutions to~\eqref{sistema}. 
In the proof of the subsequent theorems, 
we will use the geometric inequality in~\eqref{stat1} with~$\Omega=\bbR^N$.
\begin{teo}\label{teoprinc}
Let $(u^1,\ldots, u^n)$ be a weak stable solution of (\ref{sistema}) in the whole $\R^N$ satisfying~(\ref{regassump}). Let us also assume that there exist non-zero functions~$\theta^i\in C^1(\R^N)$, $i=1,\ldots,n$, which do not change sign, such that for all $i,j$ with $1\leq i<j\leq n$ it holds
\begin{align}\label{assumdermiste}
	F_{ij}(x,u^1(X),\ldots, u^n(X))\, \theta^i(X)\, \theta^j(X)\geq 0, \qquad \forall X\in\R^N.
\end{align}
Moreover, we assume that, for any $i=1,\ldots, n$, $\mathcal{A}^i(x,\nabla u^i(X))$ satisfies (\ref{Alim}), it is positive definite at almost any $X\in\R^N$ and there exist $C_1,\ldots, C_n\geq 1$ such that the largest eigenvalue $\overline{\mathcal{A}}^i(X)$ of $\mathcal{A}^i(x,\nabla u^i(X))$ satisfies 
\begin{align}\label{assump1}
	\int_{B_R}\overline{\mathcal{A}}^i(X)|\nabla u^i(X)|^2\ud X\leq C_iR^2
\end{align}
for any $R\geq \max\{C_1,\ldots, C_n\}$.  

Then, for each $i=1,\ldots, n$, there exist $\bar u^i:\R^m\times\R\longrightarrow \R$ and $\omega_i:\R^m\longrightarrow \bbS^{N-m-1}$ such that 
	\[u^i(X)=u^i(x,y)=\bar u^i(x, \left\langle\omega_i(x), y\right\rangle)
\]
for any $(x,y)\in\R^m\times \R^{N-m}$. Moreover, each $\omega_{i}(x)$ is constant in any connected component of $\{\nabla_y {u^i}\neq 0\}$.
\end{teo}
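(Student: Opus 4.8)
The plan is to test the geometric inequality~\eqref{stat1} (taken with $\Omega=\bbR^N$, as allowed by the hypotheses of Theorem~\ref{teoprinc}) against a one–parameter family of cut-off functions, and then to let the right-hand side vanish by means of the growth condition~\eqref{assump1}. Since each $\theta^k$ is non-zero and does not change sign, its sign $\epsilon_k:=\mathrm{sign}(\theta^k)\in\{-1,+1\}$ is a well-defined constant, and~\eqref{assumdermiste} becomes $\epsilon_j\epsilon_k\,F_{kj}\ge0$ for $j\neq k$. For $R>1$ let $\varphi_R\in C^\infty_c(\bbR^N)$ be a (mollified) logarithmic cut-off, $0\le\varphi_R\le1$, with $\varphi_R\equiv1$ on $B_R$, $\varphi_R\equiv0$ outside $B_{R^2}$ and $|\nabla\varphi_R(X)|\le C\,(|X|\log R)^{-1}$. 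I would plug $\psi^k:=\epsilon_k\varphi_R$, $k=1,\dots,n$, into~\eqref{stat1}.

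With this choice $(\psi^k)^2=\varphi_R^2$, so the geometric term on the left of~\eqref{stat1} is $\sum_k\int_{\mathcal{R}_{u^k}}\bigl(\sum_j\langle\mathcal{A}^k\nabla u^k_{y_j},\nabla u^k_{y_j}\rangle-\langle\mathcal{A}^k\nabla|\nabla_y u^k|,\nabla|\nabla_y u^k|\rangle\bigr)\varphi_R^2$, and I would first record that its integrand is \emph{pointwise non-negative} on $\mathcal{R}_{u^k}$: writing $\nabla|\nabla_y u^k|=\sum_j c_j\nabla u^k_{y_j}$ with $c_j=u^k_{y_j}/|\nabla_y u^k|$ and $\sum_j c_j^2=1$, and using that $\mathcal{A}^k$ is symmetric and positive definite (so $\langle\mathcal{A}^k\cdot,\cdot\rangle$ is an inner product), the triangle inequality followed by Cauchy–Schwarz in the coefficients gives $\langle\mathcal{A}^k\nabla|\nabla_y u^k|,\nabla|\nabla_y u^k|\rangle\le\sum_j\langle\mathcal{A}^k\nabla u^k_{y_j},\nabla u^k_{y_j}\rangle$. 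For the coupling term I would pair the summands $(k,j)$ and $(j,k)$, use $F_{kj}=F_{jk}$ together with $(\psi^k)^2=(\psi^j)^2=\varphi_R^2$ and $\psi^j\psi^k=\epsilon_j\epsilon_k\varphi_R^2$, rewriting it as $2\sum_{k<j}\int F_{kj}\,\varphi_R^2\bigl(\langle\nabla_y u^j,\nabla_y u^k\rangle-\epsilon_j\epsilon_k|\nabla_y u^j||\nabla_y u^k|\bigr)$. This is the decisive point: $\langle\nabla_y u^j,\nabla_y u^k\rangle-\epsilon_j\epsilon_k|\nabla_y u^j||\nabla_y u^k|$ has, by Cauchy–Schwarz, sign opposite to $\epsilon_j\epsilon_k$, while $\epsilon_j\epsilon_k F_{kj}\ge0$; hence each summand is $\le0$ and the whole coupling term is non-positive. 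Consequently the left-hand side of~\eqref{stat1} is bounded below by the non-negative geometric term.

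For the right-hand side, $\nabla\psi^k=\epsilon_k\nabla\varphi_R$ and $|\nabla_y u^k|\le|\nabla u^k|$ give $\sum_k\int_{\bbR^N}\langle\mathcal{A}^k\nabla\psi^k,\nabla\psi^k\rangle|\nabla_y u^k|^2\le\sum_k\int_{\bbR^N}\overline{\mathcal{A}}^k|\nabla\varphi_R|^2|\nabla u^k|^2$, and a dyadic decomposition of the annulus $B_{R^2}\setminus B_R$ together with~\eqref{assump1} bounds this by a quantity of order $(\log R)^{-1}$ for $R\ge\max\{C_1,\dots,C_n\}$, hence it vanishes as $R\to+\infty$. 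Therefore $\sum_k\int_{\mathcal{R}_{u^k}}\bigl(\sum_j\langle\mathcal{A}^k\nabla u^k_{y_j},\nabla u^k_{y_j}\rangle-\langle\mathcal{A}^k\nabla|\nabla_y u^k|,\nabla|\nabla_y u^k|\rangle\bigr)\varphi_R^2\to0$; since $\varphi_R\to1$ and the integrand is non-negative, Fatou's lemma forces, for every $k$,
\[
\sum_{j=1}^{N-m}\langle\mathcal{A}^k\nabla u^k_{y_j},\nabla u^k_{y_j}\rangle=\langle\mathcal{A}^k\nabla|\nabla_y u^k|,\nabla|\nabla_y u^k|\rangle\qquad\text{a.e. on }\mathcal{R}_{u^k}.
\]
Chasing the equality cases in the two Cauchy–Schwarz inequalities above, one gets that on $\mathcal{R}_{u^k}$ all the vectors $\nabla u^k_{y_1},\dots,\nabla u^k_{y_{N-m}}$ are parallel to a single vector, and a short computation then shows that $\nu_k:=\nabla_y u^k/|\nabla_y u^k|$ has vanishing full gradient, i.e. it is locally constant on $\mathcal{R}_{u^k}$ — equivalently (recalling the second inequality of Theorem~\ref{ineq}) $\mathcal{S}_{u^k}=0$, $\nabla_L|\nabla_y u^k|=0$ and $\mathcal{K}_{u^k}\equiv0$, so the level sets of $y\mapsto u^k(x,y)$ are pieces of parallel hyperplanes. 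By the classical rigidity argument used in this context (see~\cite{FG,FSV,SZ1,CVal}), on each connected component of $\{\nabla_y u^k\neq0\}$ one gets $\nu_k\equiv\omega_k\in\bbS^{N-m-1}$, with $\omega_k$ a function of $x$ alone that is constant on the components of $\{\nabla_y u^k\neq0\}$, whence $u^k(x,y)=\bar u^k(x,\langle\omega_k(x),y\rangle)$; on the fibers where $\nabla_y u^k$ vanishes identically the representation is trivial.

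I expect the main obstacle to be the handling of the coupling term: one has to notice that the $\psi^k$ must carry the \emph{same} cut-off for all $k$, with only the constant sign $\epsilon_k$ supplied by $\theta^k$, so that the symmetrization collapses the coupling term into an expression whose sign is dictated by~\eqref{assumdermiste}; with an $x$-dependent (or component-dependent) weight in front of $\varphi_R$ this cancellation mechanism breaks down. The pointwise non-negativity of the geometric integrand and the final rigidity step are comparatively standard — they run parallel to the scalar case in~\cite{FSV} — the only additional care being that the whole argument is carried out fibrewise, i.e. only in the $y$ variables, with the $x$ variables kept as passive parameters.
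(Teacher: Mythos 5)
Your proposal is correct and follows essentially the same route as the paper: you plug sign-adjusted logarithmic cut-offs $\psi^k=\mathrm{sgn}(\theta^k)\varphi_R$ into the geometric inequality \eqref{stat1} with $\Omega=\bbR^N$, use \eqref{assumdermiste} to give the coupling term a sign, kill the right-hand side via \eqref{assump1} and a capacity-type estimate (your dyadic decomposition replaces Lemma~\ref{stima}, and your annulus $B_{R^2}\setminus B_R$ replaces $B_R\setminus B_{\sqrt R}$, which is only a reparametrization), and then conclude by the same rigidity lemmas (Corollary~3.2 in \cite{CVal}, Lemma~2.11 in \cite{FSV}, Lemma~A.1 in \cite{CValp}) once the Sternberg--Zumbrun-type quantity vanishes a.e.\ on $\mathcal{R}_{u^k}$. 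The only inessential deviations are that you symmetrize the coupling term in the pairs $(k,j)$, $(j,k)$ (each term already has the right sign individually, as in the paper) and that you do not record the vanishing of the coupling term itself, which is not needed for Theorem~\ref{teoprinc} but is used later for Corollary~\ref{cor:stab} and Theorem~\ref{teoprinc2}.
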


\begin{teo}\label{teoprinc2}
Let $(u^1,\ldots, u^n)$ be a weak $F-$monotone solution of (\ref{sistema}) in the whole $\R^N$ satisfying (\ref{regassump}). We assume that, for any $i=1,\ldots, n$, $\mathcal{A}^i(x,\nabla u^i(X))$ satisfies (\ref{Alim}), it is positive definite at almost any $X\in\R^N$ and $\overline{\mathcal{A}}^i(X)$ satisfies (\ref{assump1}). 

Then, there exist $\bar u^i:\R^m\times\R\longrightarrow \R$ and $\omega_i\in \bbS^{N-m-1}$ such that 
	\[u^i(X)=u^i(x,y)=\bar u^i(x, \left\langle \omega_i, y\right\rangle)
\]
for any $(x,y)\in\R^m\times \R^{N-m}$. 

If, in addition, there exists $U\subseteq\bbR^m\times \bbR^n$ open such that, for every $j,k=1,\ldots, n$, $F_{jk}>0$ (or $F_{jk}<0$) in $U$, then $\omega_j=\omega_k=\omega$.
\end{teo}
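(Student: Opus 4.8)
The plan is to obtain Theorem~\ref{teoprinc2} from Theorem~\ref{teoprinc} and then to pin down the directions $\omega_i$ by exploiting the geometric inequality~\eqref{stat1}.

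\smallskip

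\emph{Step 1: an $F$-monotone solution satisfies the hypotheses of Theorem~\ref{teoprinc}.} Condition~(i) in Definition~\ref{mon} says that $\partial_{y_{N-m}}u^i$, a component of $\nabla u^i$, never vanishes; hence $\nabla u^i\ne 0$ on all of $\R^N$, so by~\eqref{regassump} in fact $u^i\in C^2(\R^N)$ and $\mathcal{R}_{u^i}=\{\nabla_y u^i\ne 0\}=\R^N$, which is connected. Set $\theta^i:=\partial_{y_{N-m}}u^i\in C^1(\R^N)$; it is non-zero and, being continuous on the connected set $\R^N$, has a fixed sign $\epsilon_i\in\{-1,+1\}$. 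Condition~(ii) is exactly~\eqref{assumdermiste} for this $\theta^i$, so the only hypothesis left to verify is stability. Differentiating the $i$-th equation of~\eqref{sistema} in the $y_{N-m}$ direction (the $a_i$ carry no explicit $y$-dependence), $\theta^i$ solves the linearized system $-\dive(\mathcal{A}^i\nabla\theta^i)=\sum_jF_{ij}\theta^j$. Given $\psi\in C^\infty_c(\R^N,\R^n)$, write $\psi^i=\phi^i\theta^i$ (admissible since $\theta^i$ is $C^1$ and nowhere zero), test the $i$-th linearized equation against $(\phi^i)^2\theta^i$, and invoke the standard ground-state-substitution identity (valid since $\mathcal{A}^i$ is symmetric) to get
\begin{align*}
&\sum_i\int_{\R^N}\langle\mathcal{A}^i\nabla\psi^i,\nabla\psi^i\rangle\,\ud X-\sum_{i,j}\int_{\R^N}F_{ij}\,\psi^i\psi^j\,\ud X\\
&\qquad=\sum_i\int_{\R^N}(\theta^i)^2\langle\mathcal{A}^i\nabla\phi^i,\nabla\phi^i\rangle\,\ud X+\frac12\sum_{i\ne j}\int_{\R^N}F_{ij}\,\theta^i\theta^j\,(\phi^i-\phi^j)^2\,\ud X .
\end{align*}
The first sum on the right is nonnegative because each $\mathcal{A}^i$ is positive semidefinite, and the second because $F_{ij}\theta^i\theta^j\ge 0$ for $i\ne j$ by condition~(ii) and the symmetry $F_{ij}=F_{ji}$; so~\eqref{stab} holds. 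Theorem~\ref{teoprinc} then yields $u^i(x,y)=\bar u^i(x,\langle\omega_i(x),y\rangle)$ with $\omega_i(x)$ constant on each connected component of $\mathcal{R}_{u^i}=\R^N$; connectedness makes $\omega_i$ a constant of $\bbS^{N-m-1}$, which is the first assertion.

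\smallskip

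\emph{Step 2: a definite cross-sign makes the directions coincide.} Since $u^k$ now depends on $y$ only through $\langle\nu_k,y\rangle$, where $\nu_k:=\nabla_y u^k/|\nabla_y u^k|$ is a constant unit vector, one has $u^k_{y_j}=(\nu_k)_j\,|\nabla_y u^k|$, hence $\nabla u^k_{y_j}=(\nu_k)_j\,\nabla|\nabla_y u^k|$, and since $\sum_{j=1}^{N-m}(\nu_k)_j^2=1$ the ``geometric'' integrand $\sum_j\langle\mathcal{A}^k\nabla u^k_{y_j},\nabla u^k_{y_j}\rangle-\langle\mathcal{A}^k\nabla|\nabla_y u^k|,\nabla|\nabla_y u^k|\rangle$ in~\eqref{stat1} vanishes identically. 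Also $\langle\nabla_y u^j,\nabla_y u^k\rangle=|\nabla_y u^j|\,|\nabla_y u^k|\,\langle\nu_j,\nu_k\rangle$, so~\eqref{stat1} reduces to
\[
\sum_{k\ne j}\int_{\R^N}F_{kj}\,|\nabla_y u^k|\,|\nabla_y u^j|\,\big(\psi^j\psi^k-(\psi^k)^2\langle\nu_j,\nu_k\rangle\big)\,\ud X\ \le\ \sum_k\int_{\R^N}\langle\mathcal{A}^k\nabla\psi^k,\nabla\psi^k\rangle\,|\nabla_y u^k|^2\,\ud X .
\]
I then take $\psi^k=\epsilon_k\,\eta_R$, with $\eta_R$ the usual logarithmic cut-off ($\eta_R\equiv 1$ on $B_{\sqrt R}$, $\supp\eta_R\subset B_R$; by density one may use such Lipschitz functions in~\eqref{stat1}). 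Condition~(ii) gives $F_{kj}\epsilon_k\epsilon_j\ge 0$, so the left integrand equals $|F_{kj}|\big(1-\langle\epsilon_j\nu_j,\epsilon_k\nu_k\rangle\big)\,|\nabla_y u^k|\,|\nabla_y u^j|\,\eta_R^2\ge 0$, while the right-hand side is dominated by $\sum_k\int_{\R^N}\overline{\mathcal{A}}^k\,|\nabla u^k|^2\,|\nabla\eta_R|^2\,\ud X$, which tends to $0$ as $R\to\infty$ by~\eqref{assump1} (the standard logarithmic cut-off estimate, as in the proof of Theorem~\ref{teoprinc}). Letting $R\to\infty$ and using monotone convergence on the left gives $F_{kj}\big(1-\langle\epsilon_j\nu_j,\epsilon_k\nu_k\rangle\big)\equiv 0$ on $\R^N$ for all $k\ne j$. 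The additional hypothesis prevents any $F_{jk}$ from vanishing identically along the solution, so $\langle\epsilon_j\nu_j,\epsilon_k\nu_k\rangle=1$ wherever $F_{kj}\ne 0$ and hence everywhere, the $\epsilon_j\nu_j$ being constant unit vectors; thus $\epsilon_j\nu_j=\epsilon_k\nu_k=:\omega$ for all $j,k$, and absorbing the sign $\epsilon_i$ into the profile realizes each $u^i$ as $\bar u^i(x,\langle\omega,y\rangle)$, i.e.\ $\omega_j=\omega_k=\omega$.

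\smallskip

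\emph{Main obstacle.} The heart of the matter is the collapse of~\eqref{stat1} in Step~2: for a $y$-one-dimensional solution the curvature and tangential-gradient terms are automatically zero, so the geometric inequality degenerates into a rigidity relation linking the sign of $F_{jk}$ to the mutual angle of the level-set normals $\nu_j,\nu_k$. The technical work is, first, to justify the differentiation of~\eqref{sistema} and the ground-state substitution under the weak regularity in~\eqref{regassump}--\eqref{Alim} (the matrices $\mathcal{A}^i$ are only locally bounded, so one argues in the weak formulation and mollifies the merely $C^1$ test functions), and, second, the passage to the limit $R\to\infty$, which is precisely where the quadratic energy bound~\eqref{assump1} enters; one also has to keep track of the signs $\epsilon_i$ and of the intrinsic sign ambiguity of the $\omega_i$ in the representation, and to observe that the strict-sign hypothesis is what guarantees $F_{jk}\not\equiv 0$ along the solution.
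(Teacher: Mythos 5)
Your proposal is correct and follows essentially the same route as the paper: Step~1 is the paper's Proposition~\ref{implica} (your ground-state substitution with $\psi^i=\phi^i\theta^i$ is the same computation as testing the linearized system of Proposition~\ref{dersistema} with $\xi_i^2/u^i_{y_{N-m}}$, just packaged as an exact identity rather than a pointwise quadratic inequality), after which Theorem~\ref{teoprinc} with $\theta^i=u^i_{y_{N-m}}$ and the connectedness of $\{\nabla_y u^i\neq 0\}=\R^N$ gives constant $\omega_i$. Your Step~2 re-runs the logarithmic cutoff inside \eqref{stat1} for the $y$-one-dimensional solution, which is exactly the mechanism the paper invokes by quoting the identity \eqref{oss5} already obtained in the proof of Theorem~\ref{teoprinc}, and the sign bookkeeping ($\epsilon_i$, absorption into the profiles) matches the paper's conclusion.
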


Some comments are now in order. We start recalling that a result similar 
to the one in Theorem \ref{ineq} has been proved in \cite{CVal,CValp,Far,FSV,SV1,SZ1,SZ2} in the case of one equation and in \cite{FG,dipierro} for particular systems of equations. Precisely, a geometric inequality like the one in~\eqref{stat1} 
has been obtained in \cite{FG} in the case
$$a_i(x,t)=1, \qquad F_{i}(x,u^1,\ldots, u^n)=F_{i}(u^1,\ldots, u^n), $$
and in \cite{dipierro}, for $n=2$, in the case 
$$ a_i(x,t)=a_i(t), \qquad F_{i}(x,u^1,u^2)=F_{i}(u^1,u^2).$$

We also mention the papers \cite{fsv1,fsv2,PV}, 
where an inequality similar to the one obtained in Theorem \ref{ineq} has been 
established for solutions to semilinear problems in Riemannian and sub-Riemannian manifolds. 

We remark that Theorems \ref{teoprinc} and \ref{teoprinc2} generalize to fibered media the results contained in \cite{FG,dipierro} and allow us to take into account more general systems than the ones considered in \cite{FG,dipierro} (see also the appendix for some explicit examples). 

\bigskip

The paper is organized as follows. In the next section we prove the geometric 
inequality in Theorem~\ref{ineq}. 
Sections $3$ and $4$ are devoted to the proof of Theorems~\ref{teoprinc} and~\ref{teoprinc2}. 
Finally, there is an Appendix, which contains some comments on 
the assumptions made in our theorems.

\section{A geometric inequality: proof of Theorem \ref{ineq}}
Aim of this section is to prove Theorem \ref{ineq}. 
We recall first the following lemma, which has been proved in \cite{CVal, SZ1,SZ2}.
\begin{lem} \label{ugugeom} 
For any $u\in C^{2}(\Omega)$, the following equalities hold
\begin{align*}
& \sum_{j=1}^{N-m}\left\langle \mathcal{A}\nabla u_{y_j},\nabla u_{y_j}\right\rangle-\left\langle \mathcal{A}\nabla|\nabla_y u|,\nabla|\nabla_y u|\right\rangle=a(x,|\nabla u|)\, \mathcal{U}_u+\frac{\frac{\partial a}{\partial t}(x,|\nabla u|)}{|\nabla u|}\mathcal{T}_u \quad \mbox{on}\ \ \mathcal{R}_u,\\
& \mathcal{U}_u-\mathcal{S}_u=\sum_{i,j=1}^{N-m}(u_{y_iy_j})^2-|\nabla_y|\nabla_y u||^2=\mathcal{K}_u^2|\nabla_y u|^2+|\nabla_L|\nabla_y u||^2\quad \mbox{on}\ \ \mathcal{R}_u.
\end{align*}
Moreover, $\mathcal{S}_u,\mathcal{T}_u\geq 0$ on $\mathcal{R}_u$. 
\end{lem}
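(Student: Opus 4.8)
The plan is to prove everything as a pointwise computation on $\mathcal{R}_u$, where the hypothesis $u\in C^2(\Omega)$ makes all the objects classically defined. For the first identity, write $\eta:=\nabla u$, $a:=a(x,|\nabla u|)$ and $a':=\frac{\partial a}{\partial t}(x,|\nabla u|)$, and notice that the defining formula for $\mathcal{A}$ simply says $\mathcal{A}=a\,\mathrm{Id}_N+\frac{a'}{|\eta|}\,\eta\otimes\eta$, so that $\langle\mathcal{A}p,p\rangle=a|p|^2+\frac{a'}{|\eta|}\langle\eta,p\rangle^2$ for every $p\in\R^N$. I would apply this with $p=\nabla u_{y_j}$, sum over $j=1,\dots,N-m$, apply it once more with $p=\nabla|\nabla_y u|$, and subtract; since $\eta=\nabla u$, the coefficient of $a$ becomes $\sum_j|\nabla u_{y_j}|^2-|\nabla|\nabla_y u||^2=\mathcal{U}_u$ and the coefficient of $\frac{a'}{|\nabla u|}$ becomes $\sum_j\langle\nabla u,\nabla u_{y_j}\rangle^2-\langle\nabla u,\nabla|\nabla_y u|\rangle^2=\mathcal{T}_u$, which is the first displayed equality.

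For the first equality in the second displayed line, I would split the ambient gradient along $\R^N=\R^m_x\times\R^{N-m}_y$: for any scalar $v$ one has $|\nabla v|^2=|\nabla_x v|^2+|\nabla_y v|^2$, hence $\sum_{j=1}^{N-m}|\nabla u_{y_j}|^2=\sum_{i=1}^m\sum_{j=1}^{N-m}(u_{x_iy_j})^2+\sum_{i,j=1}^{N-m}(u_{y_iy_j})^2$ and $|\nabla|\nabla_y u||^2=|\nabla_x|\nabla_y u||^2+|\nabla_y|\nabla_y u||^2$. Substituting into the definitions of $\mathcal{U}_u$ and $\mathcal{S}_u$, the $x$-blocks cancel and one is left with $\mathcal{U}_u-\mathcal{S}_u=\sum_{i,j=1}^{N-m}(u_{y_iy_j})^2-|\nabla_y|\nabla_y u||^2$.

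The geometric heart of the lemma, and the only step that is not a one-line manipulation, is the identification of $\sum_{i,j=1}^{N-m}(u_{y_iy_j})^2-|\nabla_y|\nabla_y u||^2$ with $\mathcal{K}_u^2|\nabla_y u|^2+|\nabla_L|\nabla_y u||^2$. Here I would fix $x$, regard $y\mapsto u(x,y)$ on $\mathcal{R}_u$, and set $\nu:=\nabla_y u/|\nabla_y u|$, the unit normal of the smooth hypersurface $L_u\subset\R^{N-m}$. Differentiating $u_{y_i}=|\nabla_y u|\,\nu_i$ gives $D_y^2 u=\nu\otimes\nabla_y|\nabla_y u|+|\nabla_y u|\,D_y\nu$, and since $\nu^{T}D_y\nu=\tfrac12\nabla_y|\nu|^2=0$, taking Hilbert--Schmidt norms yields $\sum_{i,j}(u_{y_iy_j})^2-|\nabla_y|\nabla_y u||^2=|\nabla_y u|^2\,|D_y\nu|^2$. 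I would then decompose $D_y\nu$ on the orthogonal splitting $\R^{N-m}=\R\nu\oplus T_yL_u$: its restriction to $T_yL_u$ is the Weingarten map, whose Hilbert--Schmidt norm equals $\mathcal{K}_u$ (its eigenvalues being the principal curvatures $k_1,\dots,k_{N-m-1}$), while $D_y\nu(\nu)=(\nu\cdot\nabla_y)\nu=\frac{1}{|\nabla_y u|}\nabla_L|\nabla_y u|$ (a short computation using $\sum_l\nu_l u_{y_iy_l}=\partial_{y_i}|\nabla_y u|$ together with the definition of the tangential gradient). Hence $|\nabla_y u|^2|D_y\nu|^2=\mathcal{K}_u^2|\nabla_y u|^2+|\nabla_L|\nabla_y u||^2$, which is the claimed equality; this is the classical Sternberg--Zumbrun level-set identity, and the bookkeeping of tangential versus normal components of $D_y\nu$ is the part that needs care.

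Finally, for the inequalities $\mathcal{S}_u,\mathcal{T}_u\ge0$ I would again use the unit vector $\nu=\nabla_y u/|\nabla_y u|\in\R^{N-m}$: since $|\nabla_y u|=(\sum_j u_{y_j}^2)^{1/2}$ one has $\nabla|\nabla_y u|=\sum_{j=1}^{N-m}\nu_j\nabla u_{y_j}$ and $\nabla_x|\nabla_y u|=\sum_{j=1}^{N-m}\nu_j\nabla_x u_{y_j}$, so Cauchy--Schwarz in the index $j$ (with the unit vector $(\nu_j)_j$) gives $\langle\nabla u,\nabla|\nabla_y u|\rangle^2\le\sum_j\langle\nabla u,\nabla u_{y_j}\rangle^2$, i.e.\ $\mathcal{T}_u\ge0$, and $|\nabla_x|\nabla_y u||^2\le\sum_j|\nabla_x u_{y_j}|^2=\sum_{i,j}(u_{x_iy_j})^2$, i.e.\ $\mathcal{S}_u\ge0$.
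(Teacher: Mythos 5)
Your proof is correct. Note that the paper does not prove Lemma \ref{ugugeom} at all: it recalls it as established in the cited references (Sternberg--Zumbrun and Chermisi--Valdinoci), and your argument is precisely the standard computation contained there --- the algebraic identity $\left\langle \mathcal{A}p,p\right\rangle=a|p|^2+\frac{\partial_t a}{|\nabla u|}\left\langle\nabla u,p\right\rangle^2$ giving the first equality, the $x$/$y$ block splitting of the gradient giving $\mathcal{U}_u-\mathcal{S}_u$, the level-set identity via $D_y^2u=\nu\otimes\nabla_y|\nabla_y u|+|\nabla_y u|\,D_y\nu$ with $D_y\nu$ split into the Weingarten map (Hilbert--Schmidt norm $\mathcal{K}_u$) and the normal derivative $\frac{1}{|\nabla_y u|}\nabla_L|\nabla_y u|$, and Cauchy--Schwarz with the unit vector $\nu$ for $\mathcal{S}_u,\mathcal{T}_u\geq 0$. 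The delicate points are all handled correctly: the cross term vanishes because $\nu^{T}D_y\nu=\tfrac12\nabla_y|\nu|^2=0$, the image of $D_y\nu$ is tangential so the orthonormal-frame decomposition of its Hilbert--Schmidt norm is legitimate, and every step is pointwise on $\mathcal{R}_u$, where $\nabla_y u\neq 0$ guarantees that $\nu$, $|\nabla_y u|$ and $\mathcal{A}(x,\nabla u)$ are classically defined.
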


In the sequel, we will need also the following result.
\begin{prop}\label{dersistema}
Let $(u^1,\ldots, u^n)$ be a weak solution of (\ref{sistema}) satisfying (\ref{regassump}). 
Suppose that, for each $i=1,\ldots, n$, $\mathcal{A}^i$ verifies (\ref{Alim}). 

Then, for every $j=1,\ldots, N-m$, the family $(u^1_{y_j},\ldots, u^n_{y_j})$ is a weak solution of the following system
\begin{equation*}
\left\{
\begin{aligned}
&-\dive(\mathcal{A}^1(x,\nabla u^1(X))\nabla u^1_{y_j}(X))=\sum_{i=1}^nF_{1i}(x, u^1(X),\ldots, u^n(X))\, u^i_{y_j}(X),\\
&\vdots\\
&-\dive(\mathcal{A}^n(x,\nabla u^n(X))\nabla u^n_{y_j}(X))=\sum_{i=1}^nF_{ni}(x, u^1(X),\ldots, u^n(X))\, u^i_{y_j}(X).
\end{aligned}
\right.
\end{equation*}
\end{prop}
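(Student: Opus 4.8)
The plan is to differentiate, for each fixed direction $y_j$, each equation of~\eqref{sistema} with respect to $y_j$. Since in the claimed linearized system the equations are decoupled (the $k$-th one is tested only against $\psi^k$), it is enough to fix $i$ and prove that, for every scalar $\phi\in C^\infty_c(\Omega)$, $u^i_{y_j}$ satisfies
\begin{equation}\label{eq:lindifi}
\int_\Omega\big\langle \mathcal{A}^i(x,\nabla u^i(X))\,\nabla u^i_{y_j}(X),\nabla\phi(X)\big\rangle\,\ud X=\int_\Omega\Big(\sum_{l=1}^n F_{il}(x,u^1(X),\ldots,u^n(X))\,u^l_{y_j}(X)\Big)\phi(X)\,\ud X,
\end{equation}
since testing~\eqref{eq:lindifi} with $\phi=\psi^i$ and summing over $i$ recovers the weak formulation of the full system for an arbitrary $\psi=(\psi^1,\ldots,\psi^n)\in C^\infty_c(\Omega,\R^n)$. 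The structural point that makes this work is that $a_i(x,\cdot)$ and $F_i(x,\cdot)$ depend on $X=(x,y)$ only through $x$, whereas $y_j$ is a $y$-variable; hence differentiating in $y_j$ never acts on the $x$-slots, no extra term appears, and by the chain rule (using the regularity of $F$ in the $\xi$-variables, so that the $F_{il}$ are continuous) one has $\partial_{y_j}\big[F_i(x,u^1,\ldots,u^n)\big]=\sum_l F_{il}(x,u^1,\ldots,u^n)\,u^l_{y_j}$.

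First I would establish~\eqref{eq:lindifi} on the open set $\Omega_i:=\{X\in\Omega:\nabla u^i(X)\neq 0\}$, where by~\eqref{regassump} $u^i\in C^2$, so that $V^i:=a_i(x,|\nabla u^i|)\nabla u^i$ is $C^1$ with $\partial_{y_j}V^i=\mathcal{A}^i(x,\nabla u^i)\,\nabla u^i_{y_j}$. Given $\phi\in C^\infty_c(\Omega_i)$, I would insert $\psi^i=-\partial_{y_j}\phi$ in the $i$-th equation of~\eqref{weak1} and integrate the $y_j$-derivative by parts on both sides, which is licit since $V^i$ and $F_i(x,u^1,\ldots,u^n)$ are $C^1$ on $\Omega_i$; this gives~\eqref{eq:lindifi} for every $\phi\in C^\infty_c(\Omega_i)$. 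I would also record that the candidate flux $w^i:=\mathcal{A}^i(x,\nabla u^i)\nabla u^i_{y_j}$, extended by $0$ outside $\Omega_i$, belongs to $L^2_{loc}(\Omega)$ thanks to~\eqref{Alim} together with $\nabla u^i\in W^{1,2}_{loc}$, that the right-hand side of~\eqref{eq:lindifi} is in $L^\infty_{loc}(\Omega)$, and that $\nabla u^i_{y_j}=0$ a.e.\ on $\Omega\setminus\Omega_i$ (each component of $\nabla u^i$ lies in $W^{1,2}_{loc}$ and hence has a.e.\ vanishing gradient on its own zero set), so that the zero extension is the natural one.

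The remaining, and in my view \textbf{main}, step is to upgrade~\eqref{eq:lindifi} from test functions supported in $\Omega_i$ to all $\phi\in C^\infty_c(\Omega)$, i.e.\ to show that the degenerate set $\{\nabla u^i=0\}$ carries no mass of $\dive w^i$. I would test with $\phi\,\chi_\varepsilon(|\nabla u^i|)$, where $\chi_\varepsilon$ is a Lipschitz cutoff vanishing on $[0,\varepsilon]$ and equal to $1$ on $[2\varepsilon,+\infty)$ (admissible by approximation, being supported in $\Omega_i$), and let $\varepsilon\to 0$: the terms $\int\langle w^i,\nabla\phi\rangle\,\chi_\varepsilon(|\nabla u^i|)\,\ud X$ and $\int\big(\sum_l F_{il}u^l_{y_j}\big)\phi\,\chi_\varepsilon(|\nabla u^i|)\,\ud X$ pass to the limit by dominated convergence, while the delicate term $\int\phi\,\chi_\varepsilon'(|\nabla u^i|)\,\langle w^i,\nabla|\nabla u^i|\rangle\,\ud X$, concentrated on $\{\varepsilon\le|\nabla u^i|\le 2\varepsilon\}$, must be shown to vanish, using the $L^2_{loc}$ bound on $\nabla(\nabla u^i)$ and the absolute continuity $\int_{\{0<|\nabla u^i|<2\varepsilon\}}|\nabla(\nabla u^i)|^2\,\ud X\to 0$. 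This is precisely the single-equation computation of~\cite{CVal,SZ1,SZ2}; its subtlety is that the field $\eta\mapsto a_i(x,|\eta|)\eta$ need not be Lipschitz near $\eta=0$ for (possibly) degenerate operators, so that the equation cannot be differentiated naively, and it is exactly to make the above cutoff argument work that assumption~\eqref{Alim} and the $W^{1,2}_{loc}$-regularity in~\eqref{regassump} are imposed. By contrast, the decoupling of the linearized system and the identification of its right-hand side via the chain rule are routine.
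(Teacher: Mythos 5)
Your first step (the linearized identity tested against $\phi\in C^\infty_c(\Omega_i)$, $\Omega_i=\{\nabla u^i\neq 0\}$) is fine, but the paper never needs your second, ``main'' step: it simply tests \eqref{weak1} with $(\psi^1_{y_j},\ldots,\psi^n_{y_j})$ and integrates by parts in $y_j$ on both sides over all of $\Omega$, using that under \eqref{regassump} and \eqref{Alim} the flux $a_i(x,|\nabla u^i|)\nabla u^i$ has weak $y_j$-derivative $\mathcal{A}^i(x,\nabla u^i)\nabla u^i_{y_j}$ and that $F_i(x,u^1,\ldots,u^n)$ is $C^1$ in $y$, so $(F_i)_{y_j}=\sum_l F_{il}u^l_{y_j}$ everywhere. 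Your route replaces this global integration by parts by a removability claim for the degenerate set $\{\nabla u^i=0\}$, and that claim is exactly where your argument breaks down.

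Two concrete gaps. First, the transition term: with a linear ramp $|\chi'_\varepsilon|\le C/\varepsilon$, your bound is $\frac{C}{\varepsilon}\int_{\{\varepsilon\le|\nabla u^i|\le 2\varepsilon\}\cap\,\mathrm{supp}\,\phi}|D^2u^i|^2$, and the absolute continuity $\int_{\{0<|\nabla u^i|<2\varepsilon\}}|D^2u^i|^2\to 0$ does not beat the factor $1/\varepsilon$ (the dyadic values of that integral may themselves decay like $\varepsilon$, which is summable, while the quotient stays bounded away from $0$); no hypothesis of the type $|D^2u^i|^2/|\nabla u^i|\in L^1_{loc}$ is available here, and a logarithmic cutoff runs into the same missing weighted estimate. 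Second, even granting that the transition term vanishes, dominated convergence gives on the right-hand side $\int_{\{\nabla u^i\neq 0\}}\big(\sum_l F_{il}u^l_{y_j}\big)\phi$, not the integral over $\Omega$ claimed in \eqref{sistema4}: on $\{\nabla u^i=0\}$ only the term $l=i$ vanishes (Stampacchia), while the off-diagonal terms $F_{il}u^l_{y_j}$, $l\neq i$, need not, and the degenerate set may have positive measure; so your limiting identity is not the statement of the Proposition without a further argument, and indeed, whenever $\int_{\{\nabla u^i=0\}}\sum_{l\neq i}F_{il}u^l_{y_j}\,\phi\neq 0$, comparison with the true identity shows the transition term cannot tend to zero at all. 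In both places the missing ingredient is precisely the fact the paper uses directly, namely the weak differentiability of the flux in the $y$-directions across $\{\nabla u^i=0\}$ (this, not your cutoff scheme, is what \eqref{regassump} and \eqref{Alim} are designed to guarantee); once that is granted, the global integration by parts makes any cutoff near the degenerate set unnecessary.
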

\begin{proof}
We need to prove that, for every 
$\psi=(\psi^1,\ldots, \psi^n)\in C^{\infty}_c(\Omega,\bbR^n)$, 
the following equalities hold (we drop for short the dependence of~$\mathcal{A}^i$):
\begin{equation}
\left\{
\begin{aligned}
\label{sistema4}
&\int_{\Omega}\left\langle\mathcal{A}^1 \nabla u^1_{y_j}(X),\nabla \psi^1(X)\right\rangle=\sum_{i=1}^n\int_{\Omega}F_{1i}(x, u^1(X),\ldots, u^n(X))\, u^i_{y_j}(X)\, \psi^1(X),\\
&\vdots\\
&\int_{\Omega}\left\langle\mathcal{A}^n \nabla u^n_{y_j}(X)\nabla \psi^n(X)\right\rangle=\sum_{i=1}^n\int_{\Omega}F_{ni}(x, u^1(X),\ldots, u^n(X))\, u^i_{y_j}(X)\, \psi^n(X).
\end{aligned}
\right.
\end{equation}
To this end, we fix $i\in \{1,\ldots, N-m\}$, and we use (\ref{weak1}) with $(\psi^1_{y_i},\ldots, \psi^n_{y_i})$ as test functions. Hence (dropping for short the dependence of $F_j$ and $a_j$), we have
\begin{align}\label{A}
\int_{\Omega} F_{j}\, \psi^j_{y_i}=\int_{\Omega} \left\langle a_j\nabla u^j,\nabla \psi^j_{y_i}\right\rangle=-\int_{\Omega}\left\langle(a_j\nabla u^j)_{y_i},\nabla\psi^j \right\rangle=-\int_{\Omega}\left\langle \mathcal{A}^j\nabla u_{y_i},\nabla\psi^j\right\rangle.
\end{align}
Moreover, 
\begin{align}\label{F}
\int_{\Omega} F_{j}\, \psi^j_{y_i}=-\int_{\Omega} (F_j)_{y_i}\, \psi^j=-\sum_{k=1}^n\int_{\Omega} F_{jk}\, u^k_{y_i}\, \psi^j,
\end{align}
and putting together (\ref{A}) and (\ref{F}) we get the thesis.
\end{proof}

\begin{oss}\label{density}
By an easy density argument (see \cite{CValp} for the details), we have that 
(\ref{sistema4}) holds for any $\psi=(\psi^1, \ldots, \psi^n)\in W^{1,2}_0(\Omega, \bbR^n)$.
\end{oss}

\begin{proof}[\textbf{Proof of Theorem \ref{ineq}:}]
Let us fix $1\leq j\leq N-m$ and $\psi=(\psi^1,\ldots, \psi^n)\in C^{\infty}_c(\Omega,\R^n)$. Then, by (\ref{regassump}), we have that 
$\varphi^i:=u^i_{y_j}(\psi^i)^2\in W^{1,2}_0(\Omega)$. 
Hence, by Remark \ref{density}, we can use $\varphi=(\varphi^1, \ldots, \varphi^n)$ 
as a test function in (\ref{sistema4}). 
It follows that (dropping for short the dependence of $\mathcal{A}^i$ and $F_{ij}$), for any $k=1,\ldots n$,
\begin{align*}
\int_{\Omega}\left\langle \mathcal{A}^k\nabla u^k_{y_j},\nabla(u^k_{y_j}(\psi^k)^2)\right\rangle = \sum_{i=1}^n \int_{\Omega} F_{ki}\, u^i_{y_j}\, u^k_{y_j}\, (\psi^k)^2.
\end{align*}
Summing over $j$, we have
\begin{align*}
\sum_{j=1}^{N-m}\int_{\Omega} \left\langle \mathcal{A}^k \nabla u^k_{y_j},\nabla(u^k_{y_j}(\psi^k)^2)\right\rangle=\sum_{i=1}^n \int_{\Omega} F_{ki}\, (\psi^k)^2\left\langle \nabla_y u^i, \nabla_y u^k\right\rangle,
\end{align*}
which implies 
\begin{equation}\begin{split}\label{MVal1} 
 \int_{\Omega} F_{kk}\, (\psi^k)^2 |\nabla_y u^k|^2 = & 
\sum_{j=1}^{N-m} \int_{\Omega} \left\langle \mathcal{A}^k \nabla u^k_{y_j},\nabla(u^k_{y_j}(\psi^k)^2)\right\rangle \\ 
&\quad - \sum_{i=1,i\neq k}^n \int_{\Omega} F_{ki}\, (\psi^k)^2\left\langle \nabla_y u^i, \nabla_y u^k\right\rangle.
\end{split}\end{equation}
Using Stampacchia's Theorem (see, for istance, \cite[Theorem $6.19$]{Stamp}), we get
\begin{align}\label{st}
		\nabla |\nabla_y u^k|=0=\nabla u^k_{y_j}\quad \mbox{for a.e.}\ x\in\bbR^m,\ \mbox{and a.e.}\ y\in\bbR^{N-m}\ s.t.\ \nabla_y u(x,y)=0.
\end{align}
Hence, summing over $k=1,\ldots, n$ in \eqref{MVal1}, we obtain
\begin{equation}\begin{split}\label{nuova}
\sum_{k=1}^n \int_{\Omega} F_{kk}\, (\psi^k)^2 |\nabla_y u^k|^2 = & 
\sum_{k=1}^n \int_{\mathcal R_{u^k}}\sum_{j=1}^{N-m} \left\langle \mathcal{A}^k \nabla u^k_{y_j},\nabla(u^k_{y_j}(\psi^k)^2)\right\rangle \\
&\quad - \sum_{k,i=1,i\neq k}^n \int_{\Omega} F_{ki}\, (\psi^k)^2\left\langle \nabla_y u^i, \nabla_y u^k\right\rangle.
\end{split}\end{equation}
 
Using $\varphi^k:=|\nabla_y u^k|\psi^k$, with $\psi=(\psi^1,\ldots, \psi^n)\in C^{\infty}_c(\Omega,\R^n)$, 
as a test function in~(\ref{stab}) (we point out that by the regularity assumptions on $u^i$ it follows that $\varphi^i\in W^{1,2}_0(\Omega)$, $i=1,\ldots, n$), and 
recalling~\eqref{st}, we obtain 
\begin{align*}
0\leq& \sum_{k=1}^n \int_{\mathcal{R}_{u^k}}\left\langle \mathcal{A}^k\nabla|\nabla_y u^k|,\nabla|\nabla_y u^k|\right\rangle(\psi^k)^2+\left\langle \mathcal{A}^k\nabla\psi^k,\nabla\psi^k\right\rangle|\nabla_y u^k|^2\\
\nonumber	
&\quad +2\left\langle \mathcal{A}^k\nabla|\nabla_y u^k|,\nabla \psi^k\right\rangle|\nabla_y u^k|\psi^k\\
\nonumber
&\quad -\sum_{k,j=1, j\neq k}^n\int_{\Omega} F_{kj}|\nabla_y u^k||\nabla_y u^j|\psi^j\psi^k-\sum_{k=1}^n\int_{\Omega} F_{kk}|\nabla_y u^k|^2(\psi^k)^2,
\end{align*}
which together with (\ref{nuova}) implies 
\begin{align}\label{ineqfond}
0 \leq & \sum_{k=1}^n \int_{\mathcal{R}_{u^k}}\left\langle \mathcal{A}^k\nabla|\nabla_y u^k|,\nabla|\nabla_y u^k|\right\rangle(\psi^k)^2+\left\langle \mathcal{A}^k\nabla\psi^k,\nabla\psi^k\right\rangle|\nabla_y u^k|^2\\
\nonumber
&\quad +\frac{1}{2}\left\langle \mathcal{A}^k\nabla|\nabla_y u^k|^2,\nabla (\psi^k)^2\right\rangle \\
\nonumber
&\quad +\sum_{k,j=1, j\neq k}^n\int_{\Omega} F_{kj}\left((\psi^k)^2\left\langle \nabla_y u^j, \nabla_y u^k\right\rangle-\psi^j\psi^k|\nabla_y u^k||\nabla_y u^j|\right)\\
\nonumber
&\quad -\sum_{k=1}^n\int_{\mathcal{R}_{u^k}}\sum_{j=1}^{N-m} (\psi^k)^2\left\langle \mathcal{A}^k\nabla u^k_{y_j}, \nabla u^k_{y_j}\right\rangle \\
\nonumber
&\quad -\sum_{k=1}^n\frac{1}{2}\int_{\mathcal{R}_{u^k}}\mathcal{A}^k\left\langle \nabla |\nabla_y u^k|^2, \nabla(\psi^k)^2\right\rangle.
\end{align}
Rewriting the inequality in~(\ref{ineqfond}) in a more compact form, we obtain
\begin{align*}
0 \leq & \sum_{k=1}^n \int_{\mathcal{R}_{u^k}}\Big[\Big(\left\langle \mathcal{A}^k\nabla|\nabla_y u^k|,\nabla|\nabla_y u^k|\right\rangle-\sum_{j=1}^{N-m} \left\langle \mathcal{A}^k\nabla u^k_{y_j}, \nabla u^k_{y_j}\right\rangle\Big)(\psi^k)^2 \\
\nonumber
&\qquad\qquad +\left\langle \mathcal{A}^k\nabla\psi^k,\nabla\psi^k\right\rangle|\nabla_y u^k|^2\Big]\\
\nonumber
&\quad +\sum_{k,j=1, j\neq k}^n\int_{\Omega} F_{kj}\Big((\psi^k)^2\left\langle \nabla_y u^j, \nabla_y u^k\right\rangle-\psi^j\psi^k|\nabla_y u^k||\nabla_y u^j|\Big),
\end{align*}
which is the first part of the statement. For the second part, we observe that, by Lemma \ref{ugugeom}, we get
\begin{align}\label{wep}
&\left\langle \mathcal{A}^k\nabla|\nabla_y u^k|,\nabla|\nabla_y u^k|\right\rangle-\sum_{j=1}^{N-m}\left\langle \mathcal{A}^k\nabla u^k_{y_j},\nabla u^k_{y_j}\right\rangle\\
	\nonumber
= & -a_k(x,|\nabla u^k|)(\mathcal{S}_{u^k}+\mathcal{K}_{u^k}^2|\nabla_y u^k|^2+|\nabla_L|\nabla_y u^k||^2)-\frac{\frac{\partial a_k}{\partial t}(x,|\nabla u^k|)}{|\nabla u^k|}\mathcal{T}_{u^k}\quad \mbox{on}\ \mathcal{R}_{u^k}.
\end{align}
Therefore, plugging (\ref{wep}), into (\ref{ineqfond}) we get the thesis.
\end{proof}

\section{Stable solutions and proof of Theorem \ref{teoprinc}}
Recalling the definition of stable solutions to~\eqref{sistema} given in~\eqref{stab}, 
in this section we will prove Theorem \ref{teoprinc}. 

First, we recall the following lemma from \cite{CValp}.
\begin{lem}\label{stima}
Let $R>0$ and $h:B_R\subset\R^N\longrightarrow \R$ be a nonnegative measurable function. For any $\rho\in (0,R)$, let
\[\xi(\rho):=2\int_{B_{\rho}} h(X)\ud X. \]

Then,
\[\int_{B_R\setminus B_{\sqrt{R}}}\frac{h(X)}{|X|^2}\ud X\leq \int_{\sqrt{R}}^R t^{-3}\xi(t)\ud t+\frac{\xi(R)}{R^2}.
\]
\end{lem}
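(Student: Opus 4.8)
The plan is to reduce the $N$-dimensional integral to a one-dimensional one by slicing into spherical shells, and then to integrate by parts. First I would dispose of the trivial case: if $h\notin L^{1}(B_{R})$, then $\xi(R)=+\infty$, the right-hand side is infinite and there is nothing to prove; so from now on I assume $h\in L^{1}(B_{R})$.

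Set $g(\rho):=\int_{B_{\rho}}h(X)\,\ud X=\frac{1}{2}\xi(\rho)$ for $\rho\in(0,R]$. By the absolute continuity of the Lebesgue integral, $g$ is nondecreasing and absolutely continuous on $[\delta,R]$ for every $\delta>0$; moreover, writing the integral in polar coordinates, $g(\rho)=\int_{0}^{\rho}\bigl(\int_{\partial B_{t}}h\,\ud\mathcal{H}^{N-1}\bigr)\,\ud t$, so that $g'(t)=\int_{\partial B_{t}}h\,\ud\mathcal{H}^{N-1}$ for almost every $t$. Using polar coordinates again for the left-hand side and noting that $|X|=t$ on $\partial B_{t}$, I get
\[
\int_{B_{R}\setminus B_{\sqrt{R}}}\frac{h(X)}{|X|^{2}}\,\ud X
=\int_{\sqrt{R}}^{R}\frac{1}{t^{2}}\biggl(\int_{\partial B_{t}}h\,\ud\mathcal{H}^{N-1}\biggr)\,\ud t
=\int_{\sqrt{R}}^{R}\frac{g'(t)}{t^{2}}\,\ud t.
\]

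Since $t\mapsto t^{-2}$ is $C^{1}$ on $[\sqrt{R},R]$ and $g$ is absolutely continuous there, I would then integrate by parts:
\[
\int_{\sqrt{R}}^{R}\frac{g'(t)}{t^{2}}\,\ud t
=\frac{g(R)}{R^{2}}-\frac{g(\sqrt{R})}{R}+2\int_{\sqrt{R}}^{R}\frac{g(t)}{t^{3}}\,\ud t.
\]
As $g\geq 0$, the term $-g(\sqrt{R})/R$ is nonpositive and may be discarded; recalling that $g=\frac{1}{2}\xi$ and that $\xi(R)\geq 0$, this yields
\[
\int_{B_{R}\setminus B_{\sqrt{R}}}\frac{h(X)}{|X|^{2}}\,\ud X
\leq \int_{\sqrt{R}}^{R}t^{-3}\xi(t)\,\ud t+\frac{\xi(R)}{2R^{2}}
\leq \int_{\sqrt{R}}^{R}t^{-3}\xi(t)\,\ud t+\frac{\xi(R)}{R^{2}},
\]
which is the desired estimate. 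I do not expect a genuine obstacle here; the only points that require a little care are the justification that $g$ is absolutely continuous (so that the fundamental theorem of calculus and the integration by parts are legitimate) and the almost-everywhere identification of $g'$ with the surface integral of $h$ over $\partial B_{t}$ — both standard consequences of the coarea formula and of properties of the Lebesgue integral once $h\in L^{1}(B_{R})$.
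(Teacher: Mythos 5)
Your proof is correct. Note that the paper does not prove Lemma \ref{stima} at all: it is recalled from \cite{CValp}, where the standard argument is slightly different from yours. There one writes, for $|X|\leq R$, the elementary identity $|X|^{-2}=R^{-2}+\int_{|X|}^{R}2t^{-3}\,\ud t$, multiplies by $h(X)$, integrates over $B_R\setminus B_{\sqrt R}$ and applies Tonelli's theorem, so that the inner integral $\int_{B_t\setminus B_{\sqrt R}}h\leq \tfrac12\xi(t)$ produces the first term and the boundary term gives $\xi(R)/(2R^2)$. Your route --- polar coordinates via the coarea formula, absolute continuity of $g(\rho)=\int_{B_\rho}h$, and integration by parts for $t^{-2}g'(t)$ --- is essentially the ``differentiated'' version of that Fubini argument and reaches the same (in fact, like the original, slightly sharper) bound with $\xi(R)/(2R^2)$. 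The Fubini version buys a little robustness: it needs no regularity of $g$, no identification of $g'$ with the spherical integral, and no separate treatment of the case $h\notin L^1(B_R)$, since Tonelli applies to nonnegative measurable integrands irrespective of finiteness; your proof handles these points correctly, but has to mention them explicitly, exactly as you did. The only cosmetic remarks are that the statement defines $\xi$ only on $(0,R)$ while both the right-hand side and your argument use $\xi(R)$ (the obvious extension), and that the inequality is only meaningful for $R>1$, which is the regime in which the paper applies it.
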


\begin{proof}[\textbf{Proof of Theorem \ref{teoprinc}}:] 
In order to prove Theorem \ref{teoprinc}, we use the geometric inequality in 
(\ref{stat1}), with~$\Omega=\bbR^N$. 
Since $\mathcal{A}^k$ is positive definite, inequality (\ref{stat1}) becomes
\begin{align}\label{ineq3}
& \sum_{k=1}^n \int_{\mathcal{R}_{u^k}}\left(\sum_{j=1}^{N-m} \left\langle \mathcal{A}^k\nabla u^k_{y_j}, \nabla u^k_{y_j}\right\rangle-\left\langle \mathcal{A}^k\nabla|\nabla_y u^k|,\nabla|\nabla_y u^k|\right\rangle\right)(\psi^k)^2 \\
\nonumber
&\quad -\sum_{k,j=1, j\neq k}^n\int_{\bbR^N} F_{kj}\Big((\psi^k)^2\left\langle \nabla_y u^j, \nabla_y u^k\right\rangle-\psi^j\psi^k|\nabla_y u^k||\nabla_y u^j|\Big)\\
	\nonumber
\leq & \sum_{k=1}^n\int_{\bbR^N} \overline{ \mathcal{A}}^k|\nabla\psi^k|^2|\nabla_y u^k|^2.
\end{align}

By Lemma $3.1$ in \cite{CVal}, we have 
\begin{align*}
& \sum_{j=1}^{N-m}\left\langle \mathcal{A}^k\nabla u^k_{y_j},\nabla u^k_{y_j}\right\rangle-\left\langle \mathcal{A}^k\nabla|\nabla_y u^k|,\nabla|\nabla_y u^k|\right\rangle\geq 0, \qquad k=1,\ldots, n.
\end{align*}
Moreover, by (\ref{assumdermiste}), we have that there exist non-zero functions $\theta^1,\ldots, \theta^n\in C^1(\R^N)$ with constant sign such that
\begin{align}\label{222}
	F_{ij}\, \theta^i\, \theta^j\geq 0, \qquad \forall i,j\in\{1,\ldots, n\}, i<j.
\end{align}

For any $R>1$, we define $\eta_R:\R^N\longrightarrow\R$ as
\begin{equation*}
\eta_R(X):=\left\{
\begin{aligned}
&1\ \ \ \ \ \ \ \ \ \ \ \ \ \ \ \ \ \ \ \ \ \ \ \ \ \ \ \ \ \mbox{if}\  X\in B_{\sqrt{R}},\\
&2\, \frac{\log R-\log |X|}{\log R}\ \ \ \ \ \mbox{if}\ X\in B_{R}\setminus B_{\sqrt{R}},\\
&0\ \ \ \ \ \ \ \ \ \ \ \ \ \ \ \ \ \ \ \ \ \ \ \ \ \ \ \ \ \mbox{if}\ X\in \R^N\setminus B_R,
\end{aligned}
\right.
\end{equation*}
and consider 
\begin{align}\label{deftest}
	\eta^i_R:=sgn(\theta^i)\eta_R,
\end{align}
where $sgn(x)$ is the Sign function. It follows that, for each $i=1,\ldots, n$, $\eta^i_R\in C^{\infty}_c(B_R)$, $0<|\eta^i_R(X)|<1$ for any $X\in\R^N$, and 
	\[|\nabla \eta^i_R(X)|\leq \frac{\chi_R(X)}{2|X|\log R},
\]
where
\begin{equation*}
\chi_R(X):=\left\{
\begin{aligned}
&1\ \ \ \ \ \ \ \  \mbox{if}\  X\in B_{R}\setminus B_{\sqrt{R}},\\
&0\ \ \ \ \ \ \ \  \mbox{otherwise}.
\end{aligned}
\right.
\end{equation*}
Moreover, from~\eqref{222}, we have
\begin{align*}
	F_{ij}\, sgn(\theta^i)\, sgn(\theta^j)\geq 0, \qquad \forall i,j\in\{1,\ldots, n\}, i<j.
\end{align*}

Using (\ref{deftest}) as a test function in (\ref{ineq3}), and observing that 
$$F_{kj}\, sgn(\theta^k)\, sgn(\theta^j)=sgn(F_{kj})F_{kj}, $$ 
we get
\begin{align}\label{qp}
& \sum_{k=1}^n\int_{\mathcal R_{u^k}}\left[\sum_{j=1}^{N-m}\left\langle \mathcal{A}^k\nabla u^k_{y_j},\nabla u^k_{y_j}\right\rangle-\left\langle \mathcal{A}^k\nabla|\nabla_y u^k|,\nabla|\nabla_y u^k|\right\rangle\right]\left(\eta^k_R\right)^2 \\
	\nonumber
&\quad -\sum_{k,j=1, j\neq k}^n \int_{\bbR^N}  \Big(sgn(F_{kj})\left\langle \nabla_y u^j, \nabla_y u^k\right\rangle-|\nabla_y u^k||\nabla_y u^j|\Big)F_{kj}\, sgn(F_{kj})\, \eta_R^2 \\
	\nonumber
\leq & \frac{1}{4\log^2{R}}\sum_{k=1}^n\int_{B_{R}\setminus B_{\sqrt{R}}}\frac{\overline{\mathcal{A}}^k|\nabla_y u^k|^2}{|X|^2}\\
	\nonumber
\leq & C\frac{1}{\log{R}}, 
\end{align}
where in the last inequality we have used the fact that $|\nabla_y u|^2\leq |\nabla u|^2$, Lemma \ref{stima} with $h(X):=\overline{\mathcal{A}}^k|\nabla_y u^k|^2$ and 
the assumption (\ref{assump1}). 

Sending $R\to +\infty$ in (\ref{qp}), we conclude that
\begin{equation}\label{oss3}
\sum_{j=1}^{N-m}\left\langle \mathcal{A}^k\nabla u^k_{y_j},\nabla u^k_{y_j}\right\rangle-\left\langle \mathcal{A}^k\nabla|\nabla_y u^k|,\nabla|\nabla_y u^k|\right\rangle= 0, \qquad \mbox{a.e. in}\ \mathcal{R}_{u^k},\ k=1,\ldots,n,\\
\end{equation}
and
\begin{equation}\label{oss5}
\Big(sgn(F_{kj})\left\langle \nabla_y u^j, \nabla_y u^k\right\rangle-|\nabla_y u^k||\nabla_y u^j|\Big)sgn(F_{kj})F_{kj}=0, \qquad \mbox{a.e. in}\ \bbR^N, 
\end{equation}
for any $k,j=1,\ldots, n$, with $j\neq k$.

By (\ref{oss3}) and Corollary $3.2$ in \cite{CVal}, we obtain that, 
for any level set $L$ of $u^k$ and any $X\in\mathcal R_{u^k}\cap L$, 
\begin{align*}
	\mathcal{K}_{u^k}=0=|\nabla_L|\nabla_y u^k||. 
\end{align*}
Therefore, using Lemma $2.11$ in \cite{FSV}, this implies that, 
for each $k=1,\ldots, n$, there exist 
$\omega_{k}:\R^m\longrightarrow\bbS^{N-m-1}$ and $\bar u^k:\R^m\times \R\longrightarrow\R$ such that
	\[u^k(x,y)=\bar u^k(x,\left\langle \omega_{k}(x), y\right\rangle).
\]

Moreover, by Lemma A.1 in \cite{CValp} we have that each $\omega_{k}(x)$ is constant in any connected component of $\{\nabla_y {u^k}\neq 0\}$. 
This concludes the proof.
\end{proof}

\medskip 

There are some cases in which the directions $\omega_1,\ldots,\omega_n$ 
may be related or may coincide. In fact, as a corollary of Theorem \ref{teoprinc}, 
we prove that this happens under some additional assumptions on the functions $F_{kj}$. 
For this, we denote by $\Im(u^1,\ldots,u^n)$ the image of the map 
$(u^1,\ldots,u^n):\bbR^N\to\bbR^n$, that is
$$ \Im(u^1,\ldots,u^n):=\left\lbrace (u^1(X),\ldots,u^n(X)), X\in\bbR^N\right\rbrace. $$ 
Then, the following symmetry result holds: 
\begin{cor}\label{cor:stab}
Under the assumptions of Theorem \ref{teoprinc}, 
we assume that, for every $x\in\bbR^m$ and for every $j,k=1,\ldots,n$, $j\neq k$, 
\begin{equation}\begin{split}\label{intervalli}
&\mbox{there exist open intervals $I_1^x, \ldots, I_n^x\subset\bbR$ such that 
$\left(I_1^x\times\ldots\times I_n^x\right)\cap\Im(u^1,\ldots,u^n)\neq\varnothing$}\\ 
&\mbox{and $F_{kj}(x, \overline{u^1},\ldots,\overline{u^n})>0$ 
(or $F_{kj}(x, \overline{u^1},\ldots,\overline{u^n})<0$)}\\ 
&\mbox{for any 
$(\overline{u^1},\ldots,\overline{u^n})\in I_1^x\times\ldots\times I_n^x$.}
\end{split}\end{equation}  

Then there exist $C_{jk}:\R^N\longrightarrow\R$ and $D_{jk}:\R^m\longrightarrow \{-1,1\}$ such that
\begin{align}\label{cvb}
	\nabla_y u^j(X)=C_{jk}(X) \nabla_y u^k(X)\quad \mbox{and}\quad \omega_{j}(x)=D_{jk}(x) \omega_{k}(x).
\end{align}

If, in addition, $\mathcal{I}:=\bigcap_{i=1}^n\{\nabla_y u^i\neq 0\}\neq\varnothing$ is connected, then $\omega_1\equiv\ldots\equiv\omega_n$ in $\mathcal{I}$.
\end{cor}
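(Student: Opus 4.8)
The plan is to read off, from the proof of Theorem~\ref{teoprinc}, the pointwise identity~\eqref{oss5},
\[
\Big(sgn(F_{kj})\left\langle \nabla_y u^j, \nabla_y u^k\right\rangle-|\nabla_y u^k||\nabla_y u^j|\Big)\,sgn(F_{kj})\,F_{kj}=0\qquad\text{a.e.\ in }\bbR^N,\quad j\neq k,
\]
and to turn it into a rigidity statement for the directions $\omega_i$ furnished by Theorem~\ref{teoprinc}. Since $sgn(F_{kj})F_{kj}=|F_{kj}|\ge0$ and $sgn(F_{kj})\langle\nabla_y u^j,\nabla_y u^k\rangle\le|\nabla_y u^j|\,|\nabla_y u^k|$ by Cauchy--Schwarz, the identity forces $sgn(F_{kj})\langle\nabla_y u^j,\nabla_y u^k\rangle=|\nabla_y u^j|\,|\nabla_y u^k|$ at a.e.\ point where $F_{kj}\neq0$, which is exactly the equality case in Cauchy--Schwarz: there $\nabla_y u^j$ and $\nabla_y u^k$ are parallel, with the same orientation when $F_{kj}>0$ and the opposite one when $F_{kj}<0$. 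Since $u^i\in C^1$, on the open set $\{F_{kj}>0\}$ (resp.\ $\{F_{kj}<0\}$, with $F_{kj}$ evaluated along the solution) the continuous function $X\mapsto\langle\nabla_y u^j,\nabla_y u^k\rangle-|\nabla_y u^j|\,|\nabla_y u^k|$ vanishes a.e.\ and hence identically; so the parallelism holds at \emph{every} point of $\{F_{kj}\neq0\}$.

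Next I would insert the conclusion of Theorem~\ref{teoprinc}: $u^i(x,y)=\bar u^i(x,\langle\omega_i(x),y\rangle)$, so $\nabla_y u^i(x,y)=g_i(x,y)\,\omega_i(x)$ with $g_i(x,y):=\partial_s\bar u^i(x,\langle\omega_i(x),y\rangle)$, and $\nabla_y u^i(x,y)=0$ precisely when $g_i(x,y)=0$. Fix $j\neq k$. Using~\eqref{intervalli} and the continuity of $(u^1,\ldots,u^n)$, for each $x\in\bbR^m$ one produces a point $(x,y)$ at which $F_{kj}$ is nonzero along the solution and $\nabla_y u^j,\nabla_y u^k$ do not vanish; there the parallelism above gives $g_j g_k\,\langle\omega_j(x),\omega_k(x)\rangle=sgn(F_{kj})\,|g_j|\,|g_k|$, whence $\langle\omega_j(x),\omega_k(x)\rangle=\pm1$ and, since $\omega_j(x),\omega_k(x)\in\bbS^{N-m-1}$, $\omega_j(x)=\pm\omega_k(x)$. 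Letting $D_{jk}(x)\in\{-1,1\}$ be this sign (and $D_{jk}(x):=1$ at the remaining $x$, if any), and using that each $\omega_i$ is constant on the connected components of $\{\nabla_y u^i\neq0\}$ to see that $D_{jk}$ is consistently defined, we obtain $\omega_j=D_{jk}\,\omega_k$ on all of $\bbR^m$. Setting $C_{jk}(x,y):=D_{jk}(x)\,g_j(x,y)/g_k(x,y)$ on $\{\nabla_y u^k\neq0\}$ (and $C_{jk}:=0$ elsewhere) then yields $\nabla_y u^j=C_{jk}\,\nabla_y u^k$, that is~\eqref{cvb}.

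For the last assertion, suppose $\mathcal I=\bigcap_{i=1}^n\{\nabla_y u^i\neq0\}$ is nonempty and connected. Being a connected subset of each $\{\nabla_y u^i\neq0\}$, $\mathcal I$ is contained in a single connected component of it, so $\omega_i$ is constant on $\mathcal I$, say $\omega_i\equiv w_i$; by the relation just proved $w_j=\pm w_k$ for all $j,k$, hence the $w_i$ all lie on one line through the origin of $\bbR^{N-m}$. Since replacing $\omega_i$ by $-\omega_i$ (and $\bar u^i(x,s)$ by $\bar u^i(x,-s)$) leaves the representation unchanged, we may normalise signs so that $w_1=\cdots=w_n$, i.e.\ $\omega_1\equiv\cdots\equiv\omega_n$ on $\mathcal I$. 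The Cauchy--Schwarz rigidity is routine once~\eqref{oss5} is available; the main obstacle is the bookkeeping that upgrades the a.e.\ identity~\eqref{oss5} to a pointwise statement valid for \emph{every} $x$ --- so that $D_{jk}$, and the identity $\omega_j=D_{jk}\omega_k$, are defined on all of $\bbR^m$ --- and the care required on the vanishing set of $\nabla_y u^k$ in $\nabla_y u^j=C_{jk}\nabla_y u^k$; this is where the continuity of the solution, the local constancy of the $\omega_i$ from Theorem~\ref{teoprinc}, and the full strength of~\eqref{intervalli} must be combined.
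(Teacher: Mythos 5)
Your overall route is the same as the paper's: extract the a.e.\ identity \eqref{oss5} from the proof of Theorem \ref{teoprinc}, use the hypothesis \eqref{intervalli} to find, for each fixed $x$, a point of the slice $\{x\}\times\bbR^{N-m}$ where $F_{kj}\neq 0$ along the solution and the gradients $\nabla_y u^i$ do not vanish, invoke the equality case of Cauchy--Schwarz to get $\langle\omega_j(x),\omega_k(x)\rangle=\pm1$, and, on a connected $\mathcal{I}$, remove the sign ambiguity by replacing $\bar u^k(x,s)$ with $\bar u^k(x,-s)$. The Cauchy--Schwarz rigidity, the definition of $C_{jk}$, $D_{jk}$, and the final normalization all match the paper's argument.

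The one step you dispatch too quickly is precisely the one the paper does not prove either but delegates to a reference: producing, for \emph{every fixed} $x$, a nonempty open set $V\subset\bbR^{N-m}$ with $u^i(x,y)\in I_i^x$ and $\nabla_y u^i(x,y)\neq 0$ for all $y\in V$ and all $i$. This does not follow from \eqref{intervalli} plus continuity alone: \eqref{intervalli} only says that the box $I_1^x\times\cdots\times I_n^x$ meets the image of $(u^1,\ldots,u^n)$ over all of $\bbR^N$, so the point realizing the intersection may sit on a different slice $\{x'\}\times\bbR^{N-m}$, and there is no a priori reason the slice at $x$ hits the box at all, let alone with nonvanishing $\nabla_y u^i$. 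The paper obtains this by arguing as in the second part of Theorem $1.8$ of \cite{DP}; your proposal should either reproduce that argument or cite it. A related, smaller caveat: your upgrade of \eqref{oss5} from a.e.\ in $\bbR^N$ to every point of $\{F_{kj}\neq0\}$ uses that this set (with $F_{kj}$ evaluated along the solution) is open, which requires continuity of $F_{kj}$ jointly in $(x,\xi)$ --- not among the standing assumptions, which only give regularity in $\xi$ for fixed $x$; since the slice $\{x\}\times V$ is a null set, some such argument (continuity in $y$ on the slice plus a Fubini-type selection of nearby slices, or the structure provided by \eqref{assumdermiste}) is needed to land \eqref{strut} at a point $y_*\in V$, and it is worth making explicit. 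Apart from these two points, your proof is correct and coincides with the paper's.
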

\begin{proof}
By Theorem \ref{teoprinc}, for each $j,k=1,\ldots, n$,
\begin{align}\label{str2}
	u^j(x,y)=\bar u^j(x, \left\langle\omega_{j}(x), y\right\rangle),\quad \quad u^k(x,y)=\bar u^k(x, \left\langle \omega_{k}(x), y\right\rangle),
\end{align}
for some $\bar u^j,\bar u^k:\R^m\times \R\longrightarrow \R$ and $\omega_j,\omega_k:\R^m\longrightarrow \mathbb{S}^{N-m-1}$. 

Now, for any fixed $x\in\bbR^m$, arguing as in the proof of the second part of Theorem 
$1.8$ in \cite{DP}, one can prove that there exists a non-empty open set 
$V\subset\bbR^{N-m}$ such that $u^i(x,y)\in I_i^x$ and 
$\nabla_y u^i(x,y)\neq 0$, for all $y\in V$ and $i=1,\ldots,n$. 

Therefore, using (\ref{oss5}) and (\ref{intervalli}), we obtain that 
there exists $y_*\in V$ such that  
\begin{align}\label{strut}
sgn(F_{kj})\left\langle \nabla_y u^j(x,y_*), \nabla_y u^k(x,y_*)\right\rangle-|\nabla_y u^k(x,y_*)||\nabla_y u^j(x,y_*)|=0, 
\end{align}
for any $j,k=1,\ldots, n$, $j\neq k$. 
Moreover, from (\ref{str2}), we have that $\nabla_y u^j(x,y_*)$ is proportional to $\omega_j(x)$ and 
$\nabla_y u^k(x,y_*)$ is proportional to $\omega_k(x)$. 
Hence, (\ref{strut}) together with (\ref{str2}) implies (\ref{cvb}). 

If $\mathcal{I}\neq\varnothing$ is connected, we have that 
$$ \omega_i(x)=\omega_i, \qquad i=1,\ldots,n, $$
because, from Theorem \ref{teoprinc}, we know that 
each $\omega_i$ is constant in any connected component of 
$\left\lbrace \nabla_y u^i\neq 0\right\rbrace$. 

Now, plugging the functions in (\ref{str2}) into (\ref{strut}), we have that  
\begin{align*}
|\partial_z \overline{u}^j||\partial_z \overline{u}^k|\Big(\pm\left\langle \omega_j,\omega_k\right\rangle-1\Big)=0, \qquad j,k=1,\ldots, n, j\neq k,
\end{align*}
where $\partial_z \overline{u}^i$ denotes the derivative of the function 
$\overline{u}^i$ with respect to the last variable.  
Therefore, from the last equality, we deduce that, for every $j,k=1,\ldots, n$, 
\begin{align*}
\left\langle \omega_j,\omega_k\right\rangle=\pm 1, \qquad \mbox{in}\ \mathcal{I}.
\end{align*}
If $\omega_k=-\omega_j$, we have that 
$u^j(x,y)=\overline{u}^j(x,\left\langle\omega_j, y\right\rangle)$ and 
$u^k(x,y)=\overline{u}^k(x,\left\langle-\omega_j, y\right\rangle)$; then, we can define 
$\tilde{u}^k(x,y):=\overline{u}^k(x,-y)$, and obtain 
$u^k(x,y)=\tilde{u}^k(x,\left\langle\omega_j, y\right\rangle)$. 
This means that we can take $\omega_j=\omega_k$ up to renaming the function that 
describes $u^k$. 
Hence, we have that $\omega_j=\omega_k$ for every $j,k=1,\ldots,n$, 
and this concludes the proof. 
\end{proof}

\section{Monotone solutions and proof of Theorem \ref{teoprinc2}}
In this section we prove our symmetry result for monotone solutions to~\eqref{sistema}. 
First of all, we show that $F-$monotonicity implies stability 
(see Definitions~\ref{mon} and~\ref{defstab}).
\begin{prop}\label{implica}
If $(u^1,\ldots, u^n)$ is a $F-$monotone solution of (\ref{sistema}), then it is also stable.
\end{prop}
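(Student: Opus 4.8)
The plan is to verify directly the stability inequality \eqref{stab} by exhibiting, for an arbitrary test function $\psi=(\psi^1,\dots,\psi^n)\in C^\infty_c(\Omega,\R^n)$, a nonnegative quantity that dominates the left-hand side. The natural candidate is built from the equations satisfied by the derivatives $u^i_{y_{N-m}}$, which by Proposition \ref{dersistema} form a weak solution of the linearized system
\begin{equation*}
-\dive(\mathcal{A}^k(x,\nabla u^k)\nabla u^k_{y_{N-m}})=\sum_{i=1}^n F_{ki}\, u^i_{y_{N-m}},\qquad k=1,\dots,n.
\end{equation*}
Since $(u^1,\dots,u^n)$ is $F$-monotone, by part (i) of Definition \ref{mon} each $u^k_{y_{N-m}}$ does not vanish in $\Omega$, so $\sigma_k:=u^k_{y_{N-m}}$ is a fixed-sign function of class $C^1$ (by \eqref{regassump}) and $C^2$ where $\nabla u^k\neq 0$. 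This is precisely the ground-state-type substitution used for scalar equations: one tests the linearized equation for $\sigma_k$ against $\varphi^k:=(\psi^k)^2/\sigma_k$.

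First I would make this substitution rigorous: since $\psi^k$ has compact support and $\sigma_k$ is continuous and nowhere zero, $1/\sigma_k$ is bounded on $\supp\psi^k$, and using \eqref{regassump} one checks $\varphi^k\in W^{1,2}_0(\Omega)$, hence it is an admissible test function by Remark \ref{density}. Computing $\nabla\varphi^k = 2\psi^k\nabla\psi^k/\sigma_k - (\psi^k)^2\nabla\sigma_k/\sigma_k^2$ and plugging into the weak formulation of the linearized system for each $k$, then summing over $k$, yields
\begin{equation*}
\sum_k\int_\Omega \Big(2\frac{\psi^k}{\sigma_k}\langle\mathcal{A}^k\nabla\sigma_k,\nabla\psi^k\rangle - \frac{(\psi^k)^2}{\sigma_k^2}\langle\mathcal{A}^k\nabla\sigma_k,\nabla\sigma_k\rangle\Big)
=\sum_{k,i}\int_\Omega F_{ki}\,\frac{\sigma_i}{\sigma_k}(\psi^k)^2.
\end{equation*}
Next I would insert this identity into the stability functional. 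The quadratic terms $\langle\mathcal{A}^k\nabla\psi^k,\nabla\psi^k\rangle$ combine with the two terms above to form the complete square $\langle\mathcal{A}^k\nabla(\psi^k) - (\psi^k/\sigma_k)\nabla\sigma_k,\ \nabla\psi^k - (\psi^k/\sigma_k)\nabla\sigma_k\rangle$, which is $\geq 0$ at a.e. point because $\mathcal{A}^k$ is positive semidefinite (it equals $a_k(x,|\nabla u^k|)I + \frac{\partial a_k}{\partial t}(x,|\nabla u^k|)\frac{\nabla u^k\otimes\nabla u^k}{|\nabla u^k|}$ on $\{\nabla u^k\neq 0\}$, and by Lemma \ref{ugugeom}'s context this is nonnegative definite along the relevant directions — more simply, one checks positive semidefiniteness is ensured by the sign and regularity hypotheses on $a_k$; on $\{\nabla u^k=0\}$ one uses Stampacchia's theorem \eqref{st} to see the $\nabla\sigma_k$ terms drop out). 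After this cancellation the stability expression reduces to
\begin{equation*}
\sum_k\int_\Omega\Big\langle\mathcal{A}^k\Big(\nabla\psi^k-\tfrac{\psi^k}{\sigma_k}\nabla\sigma_k\Big),\nabla\psi^k-\tfrac{\psi^k}{\sigma_k}\nabla\sigma_k\Big\rangle
+\sum_{k,i}\int_\Omega F_{ki}\Big(\tfrac{\sigma_i}{\sigma_k}(\psi^k)^2 - \psi^k\psi^i\Big) - \sum_{k,i}\int_\Omega F_{ki}\psi^k\psi^i \cdot(\cdots),
\end{equation*}
so the matter comes down to showing the leftover off-diagonal terms are nonnegative.

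The main obstacle — and the point where part (ii) of Definition \ref{mon} enters — is precisely controlling these cross terms. After bookkeeping, the residual contribution is $\sum_{k\neq i}\int_\Omega F_{ki}\big(\frac{\sigma_i}{\sigma_k}(\psi^k)^2 - \psi^k\psi^i\big)$; symmetrizing in $(k,i)$ (using $F_{ki}=F_{ik}$) gives $\frac12\sum_{k\neq i}\int_\Omega F_{ki}\,\big(\sqrt{\tfrac{\sigma_i}{\sigma_k}}\,\psi^k - \sqrt{\tfrac{\sigma_k}{\sigma_i}}\,\psi^i\big)^2 \cdot \mathrm{sgn}(\sigma_k\sigma_i)$, which is well-defined because $\sigma_k\sigma_i$ has constant sign, and is nonnegative exactly when $F_{ki}\,\sigma_k\sigma_i\geq 0$ — which is hypothesis (ii) with $i<j$. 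Thus the whole stability functional is a sum of manifestly nonnegative terms, proving \eqref{stab}. I expect the delicate bookkeeping to be the verification that the square completes correctly and that the $\sqrt{\sigma_i/\sigma_k}$ rearrangement is legitimate (one should work on $\supp\psi$ where the $\sigma$'s are bounded away from $0$ in absolute value, so all divisions and square roots are harmless), together with the routine but careful handling of the set $\{\nabla u^k=0\}$ via \eqref{st}.
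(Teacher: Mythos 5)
Your proposal is correct and follows essentially the same route as the paper's proof: test the linearized system of Proposition \ref{dersistema} with $(\psi^k)^2/u^k_{y_{N-m}}$, complete the square using the positivity of $\mathcal{A}^k$, and absorb the off-diagonal terms via condition (ii) of Definition \ref{mon}, exactly as in the paper's estimates \eqref{qvb} and \eqref{qvb2}. The only caveat is your parenthetical claim that positive semidefiniteness of $\mathcal{A}^k$ follows from the sign and regularity hypotheses on $a_k$: it does not, and in fact it is an explicit assumption in Theorem \ref{teoprinc2}, invoked by the paper's proof in the same way you use it.
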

\begin{proof}
Choosing $\psi^i:=\frac{\xi_i^2}{u^i_{y_{N-m}}}\in W^{1,2}_0(\Omega)$ in (\ref{sistema4}), 
where $\xi_i\in C^{\infty}_c(\Omega)$ 
(we explicitly observe that this is possible thanks to Remark \ref{density}), we obtain
\begin{align}\label{dfg}
\sum_{j=1}^n\int_{\Omega}F_{ij}\, \frac{u_{y_{N-m}}^j}{u^i_{y_{N-m}}}\, \xi_i^2 & =\int_{\Omega}\left\langle \mathcal{A}^i\nabla u^i_{y_{N-m}}, \frac{(\nabla\xi_i^2)\, u^i_{y_{N-m}}-\xi_i^2\, \nabla u^i_{y_{N-m}}}{(u^i_{y_{N-m}})^2}\right\rangle \\
\nonumber
&=-\int_{\Omega}\frac{\xi_i^2}{(u^i_{y_{N-m}})^2}\left\langle \mathcal{A}^i\nabla u^i_{y_{N-m}},\nabla u^i_{y_{N-m}}\right\rangle \\ \nonumber
&\qquad +2\int_{\Omega}\frac{\xi_i}{u^i_{y_{N-m}}}\left\langle \mathcal{A}^i\nabla u^i_{y_{N-m}},\nabla\xi_i\right\rangle\\
\nonumber
&\leq \int_{\Omega}\left\langle \mathcal{A}^i\nabla\xi_i,\nabla\xi_i\right\rangle,
\end{align}
where in the last inequality we have used the fact that since $\mathcal{A}^i$ is positive definite then the following inequality holds for each $a\in\bbR$, $v,w\in\bbR^N$, $i\in\{1,\ldots, n\}$:
	\[2a\left\langle \mathcal{A}^iv,w\right\rangle-a^2\left\langle \mathcal{A}^iw,w\right\rangle-\left\langle \mathcal{A}^iv,v\right\rangle=\left\langle \mathcal{A}^i(v-aw),aw-v\right\rangle\leq 0.
\]
Summing over $i\in \{1,\ldots, n\}$ in (\ref{dfg}), we get
\begin{align}\label{qvb}
\sum_{i,j=1}^n\int_{\Omega}F_{ij}\, \frac{u^j_{y_{N-m}}}{u^i_{y_{N-m}}}\, \xi_i^2\leq \sum_{i=1}^n\int_{\Omega}\left\langle \mathcal{A}^i\nabla\xi_i,\nabla\xi_i\right\rangle, \qquad \forall \xi_i\in C^{\infty}_c(\Omega).
\end{align}
Moreover, 
\begin{align}\label{qvb2}
\sum_{i,j=1}^n F_{ij}\, \frac{u^j_{y_{N-m}}}{u^i_{y_{N-m}}}\, \xi_i^2&=\sum_{i,j=1}^n F_{ij}\, u^i_{y_{N-m}}u^j_{y_{N-m}}\, \frac{\xi_i^2}{(u^i_{y_{N-m}})^2} \\ \nonumber &= \sum_{i=1}^n F_{ii}\, \xi_i^2 + \sum_{i<j}F_{ij}\, u^i_{y_{N-m}}\, u^j_{y_{N-m}}\left(\frac{\xi_i^2}{(u^i_{y_{N-m}})^2}+\frac{\xi_j^2}{(u^j_{y_{N-m}})^2}\right)\\
\nonumber
&\geq \sum_{i,j=1}^nF_{ij}\, \xi_i\, \xi_j,
\end{align}
where in the last inequality we have used the fact that $F_{ij}\, u^i_{y_{N-m}}u^j_{y_{N-m}}\geq 0$ if $i<j$. Putting together (\ref{qvb}) and (\ref{qvb2}) we get the thesis.
\end{proof}

\begin{proof}[\textbf{Proof of Theorem \ref{teoprinc2}:}] 
By Proposition \ref{implica}, every monotone solution of (\ref{sistema}) is also stable. 
Moreover, the assumption in (\ref{assumdermiste}) is verified 
(it is enough to take $\theta^i:=u^i_{y_{N-m}}$, $\theta^j:=u^j_{y_{N-m}}$ which belong to $C^1(\bbR^N)$ thanks to \eqref{regassump}). 
Then, the hypotheses of Theorem \ref{teoprinc} are satisfied, and therefore 
we conclude that there exist $\bar u^i:\R^m\times\R\longrightarrow \R$ and $\omega_i\in \bbS^{N-m-1}$ such that 
\begin{align}\label{oss12}
		u^i(X)=u^i(x,y)=\bar u^i(x, \left\langle\omega_i, y\right\rangle)
\end{align}
for any $(x,y)\in\R^m\times \R^{N-m}$, $i\in\{1,\ldots, n\}$. 

Let us now assume that there exists $U\subseteq\bbR^m\times\bbR^n$ open such that, for every $j,k=1,\ldots, n$, $F_{jk}>0$ (or $F_{jk}<0$) in $U$. 
Using (\ref{oss5}) and (\ref{oss12}), we get  
\begin{align*}
\pm |\partial_z\overline{u}^j||\partial_z \overline{u}^k|\left\langle \omega_j,\omega_k\right\rangle=|\partial_z\overline{u}^j||\partial_z \overline{u}^k|\qquad \mbox{in}\ U,
\end{align*}
which implies $\left\langle \omega_j,\omega_k\right\rangle=\pm 1$, and hence $\omega_j=\omega_k=\omega$ (see the comments at the end of the proof of Corollary \ref{cor:stab}). This concludes the proof.
\end{proof}

\appendix
\section{}
In this appendix we analyze the assumptions made in Section $1$ 
in order to get our symmetry results. 

\subsection{Optimality of the assumptions}
We start observing that the regularity assumptions (\ref{regassump}) are fulfilled in a lot of interesting cases. Precisely, let $(u^1,\ldots, u^n)$  be a solution of (\ref{sistema}) with $u^i\in W^{1,p_i}(\Omega)\cap L^{\infty}(\Omega)$, and define 
$$ b^j_i(x,\nabla u^i(X)):=a_i(x,|\nabla u^i|(X))\partial_j u^i(X), \qquad G_i(X):=F_i(x,u^1(X),\ldots, u^n(X)).$$
Let us assume that for every $i=1,\ldots, n$
\begin{align}
\label{111}&b^j_i\in  C^{0}(\bbR^m\times \bbR^N)\cap C^1(\bbR^m\times \bbR^N\setminus\{0\}), \qquad j=1,\ldots,N\\
&\sum_{j,k=1}^N \frac{\partial b^j_i}{\partial\eta_k}(x,\eta)\xi_j\xi_k\geq \sigma(k+|\eta|)^{p_i-1}|\xi|^2,\\
&\sum_{j,k=1}^N\left|\frac{\partial b^j_i}{\partial\eta_k}(x,\eta)\right|\leq \Gamma (k+|\eta|)^{p_i-2},\\
&\sum_{j,k=1}^N\left|\frac{\partial b^j_i}{\partial x_k}(x,\eta)\right|\leq  \Gamma (k+|\eta|)^{p_i-2}|\eta|,\\
\label{100}&|G_i(X)|\leq \Gamma,
\end{align}
for all $\eta\in \bbR^N\setminus\{0\}$, $\xi\in \bbR^N$, $X\in\bbR^N$, 
with $p_i\geq 2$, $k\in [0,1]$, $\Gamma,\sigma>0$. 

Then, by \cite{Di,Le,Tolk,S},  we conclude that $u^i\in C^1(\bbR^N)\cap C^2(\{\nabla u^i\neq 0\})$ for each $i=1,\ldots, n$. 

Moreover, using  $(2.2.2)$ in \cite{Tolk} and Theorem $1.1$ and Proposition $2.2$ in \cite{DS}, we conclude that also the assumption $u^i\in W^{1,2}_{loc}(\bbR^N)$ is always verified if either $\{\nabla u^i= 0\}=\varnothing$ for $i=1,\ldots, n$ or $1<p<3$. 

Therefore, the functions~$u^i$ satisfy the regularity assumptions 
in~\eqref{regassump} provided the conditions in~\eqref{111}-\eqref{100} hold.

It is interesting to note that, as in the scalar case, 
the assumption $|\nabla u|\in L^{\infty}(\Omega)$ cannot be removed. 
Indeed, without such an assumption, one can find a solution which is not 
one-dimensional, according to the following proposition 
(see Proposition $3.1$ in~\cite{FSV}):
\begin{prop}
Let $k>0$ and $\psi\in C^1((k,+\infty))$ satisfying $\dot{\psi}(t)>0$ in $(k,+\infty)$ and $\lim_{t\to+\infty}\psi(t)=+\infty$. Then, there exists $a\in C^1((0,+\infty))$ strictly positive, and $u\in C^2(\bbR^N)$ which is a stable solution of 
	\[-\dive\big(a(|\nabla u(X)|)\nabla u(X)\big)=N
\]
and such that $|\nabla u(X)|=\psi(|X|)$ for any $|X|$ suitably large. 

Moreover, $u$ does not possess one-dimensional symmetry.
\end{prop}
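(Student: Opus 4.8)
The plan is to produce a radial example. I look for $u$ of the form $u(X)=v(|X|)$; writing $r=|X|$ one has $\nabla u(X)=v'(r)X/r$, $|\nabla u(X)|=|v'(r)|$, and
\[\dive\big(a(|\nabla u|)\nabla u\big)=\frac{1}{r^{N-1}}\frac{\ud}{\ud r}\Big(r^{N-1}\,a(|v'(r)|)\,v'(r)\Big),\]
so the equation $-\dive(a(|\nabla u|)\nabla u)=N$ amounts to $\frac{\ud}{\ud r}\big(r^{N-1}a(|v'(r)|)v'(r)\big)=-Nr^{N-1}$; choosing the integration constant to vanish this becomes $a(|v'(r)|)v'(r)=-r$. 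I therefore seek a strictly decreasing $v$, and introducing $h(t):=t\,a(t)$ for $t>0$ the identity reads $h(|v'(r)|)=r$. Hence it suffices to choose $a$ so that $h$ is a $C^1$, strictly increasing bijection of $(0,+\infty)$ onto itself, and then set $v'(r):=-h^{-1}(r)$, $v(r):=-\int_0^r h^{-1}(\rho)\,\ud\rho$ and $u(X):=v(|X|)$. With this choice $|\nabla u(X)|=h^{-1}(|X|)$, so in order to get $|\nabla u(X)|=\psi(|X|)$ for $|X|$ large it is enough to impose $h=\psi^{-1}$ near $+\infty$; this is legitimate because, $\psi$ being $C^1$ and strictly increasing on $(k,+\infty)$ with $\lim_{+\infty}\psi=+\infty$, its inverse $\psi^{-1}$ is $C^1$, strictly increasing and bounded below by $k>0$ on a half-line $[T,+\infty)$.

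Next I would construct $h$ concretely: take $h(t):=t$ for $t\in(0,\delta)$ with $\delta>0$ small, $h(t):=\psi^{-1}(t)$ for $t\geq T$ with $T$ large enough that $\psi^{-1}$ is defined and $C^1$ on $[T,+\infty)$ and $\psi^{-1}(T)>\delta$, and glue the two pieces by a $C^1$, strictly increasing, positive interpolation on $[\delta,T]$ matching the values and first derivatives at the endpoints (possible since $\psi^{-1}(T)>\delta$ and the prescribed endpoint slopes $1$ and $1/\dot\psi(\psi^{-1}(T))$ are positive). Then $a(t):=h(t)/t$ is $C^1$ and strictly positive on $(0,+\infty)$, $h^{-1}$ is $C^1$ and strictly increasing, so $v\in C^2((0,+\infty))$ and $u\in C^2(\bbR^N\setminus\{0\})$. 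Since $h(t)=t$ near $0$ we get $h^{-1}(r)=r$ near $0$, hence $u(X)=v(0)-|X|^2/2$ in a neighbourhood of the origin, which shows $u\in C^2(\bbR^N)$. By construction $-\dive(a(|\nabla u|)\nabla u)=N$ holds pointwise on $\bbR^N\setminus\{0\}$, and near $0$ it reads $-\Delta u=N$; thus $u$ is a classical, a fortiori weak, solution, and $|\nabla u(X)|=h^{-1}(|X|)=\psi(|X|)$ for $|X|\geq T$. One also checks \eqref{Alim}: on each ball $B_R$ one has $|\nabla u|\leq h^{-1}(R)$, and $a,a'$ are bounded on $(0,h^{-1}(R)]$ because $a\equiv1$ near $0$, so $\mathcal{A}(\nabla u)\in L^\infty(B_R\setminus\{0\})$.

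It remains to verify stability and the failure of one-dimensional symmetry. Here $n=1$ and the right-hand side of the equation is the constant $N$, so in Definition~\ref{defstab} one has $F_{11}\equiv0$ and the stability inequality reduces to $\int_{\bbR^N}\langle\mathcal{A}(\nabla u(X))\nabla\psi,\nabla\psi\rangle\,\ud X\geq0$ for every $\psi\in C^\infty_c(\bbR^N)$. The matrix $\mathcal{A}_{ij}(\eta)=a(|\eta|)\delta_{ij}+a'(|\eta|)\eta_i\eta_j/|\eta|$ has eigenvalues $a(|\eta|)>0$ (multiplicity $N-1$) and $a(|\eta|)+|\eta|a'(|\eta|)=h'(|\eta|)>0$, so $\mathcal{A}(\nabla u(X))$ is positive definite for $X\neq0$ and the integrand is nonnegative; hence $u$ is stable. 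Finally, if $u$ were one-dimensional, $u(X)=g(\langle\omega,X\rangle)$ with $\omega\in\bbS^{N-1}$ and $g\in C^1(\bbR)$, then $|\nabla u(X)|=|g'(\langle\omega,X\rangle)|$ would depend only on $\langle\omega,X\rangle$; but since $N\geq2$ the hyperplane $\{\langle\omega,X\rangle=c\}$ contains points of arbitrarily large and mutually distinct norm, so picking $X_1,X_2$ on it with $|X_1|,|X_2|\geq T$ and $|X_1|\neq|X_2|$ gives $\psi(|X_1|)=|g'(c)|=\psi(|X_2|)$, contradicting the strict monotonicity of $\psi$. The only step requiring a little care is the $C^1$ gluing of $h$ on $[\delta,T]$ with the prescribed boundary jet and monotonicity; everything else is a direct computation.
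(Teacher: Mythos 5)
Your construction is correct and is essentially the same as the proof of the result this paper only cites (Proposition $3.1$ of \cite{FSV}, not reproduced here): one builds a radial solution with $a(|\nabla u(X)|)\nabla u(X)=-X$, choosing $a$ so that $h(t)=t\,a(t)$ equals $t$ near $0$ and $\psi^{-1}$ near $+\infty$, which gives stability from the positivity of the eigenvalues $a$ and $h'$ of $\mathcal{A}$ and kills one-dimensional symmetry because $|\nabla u|=\psi(|X|)$ is non-constant on hyperplanes. The only refinement worth making is to require the glued piece to satisfy $h'>0$ on $[\delta,T]$ (strict monotonicity alone does not guarantee $h^{-1}\in C^1$, hence $u\in C^2$); since the prescribed endpoint slopes $1$ and $1/\dot{\psi}(\psi^{-1}(T))$ are positive and the increment $\psi^{-1}(T)-\delta>0$, such an interpolant is immediate, e.g.\ by prescribing $h'$ as a positive continuous function with the correct endpoint values and integral.
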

We also mention that, proceeding exactly as in \cite{FSV,dipierro}, the assumption on the regularity of $F$, i.e. for any $(x,\xi^1,\ldots, \xi^{i-1},\xi^{i+1},\ldots, \xi^n)\in \R^m\times \R^{n-1}$ 
the map $\xi^i\to F(x,\xi^1,\ldots,\xi^i,\ldots, \xi^n)$ belongs to $C^2(\R)$, 
can be weakened requiring that the map $\xi^i\to F(x,\xi^1,\ldots,\xi^i,\ldots, \xi^n)$ is only $C_{loc}^{1,1}(\R)$. Notice that the extension to locally Lipschitz nonlinearities could be very interesting from a physical viewpoint; indeed, very often, physical applications are run by locally Lipschitz forces.

\subsection{On the $F$-monotonicity condition}
Proceeding in our discussion about the consistency of assumptions made in Section $1$, it is worth noticing that, as pointed out in \cite{FG}, the notion of $F-$monotonicity (see Definition \ref{mon}) seems to be crucial in order prove that a solution is one-dimensional. Indeed, let us consider the following system 
\begin{align}\label{sistwell}
-\Delta u+\nabla F(u)=0\quad \mbox{in}\ \R^2,
\end{align}
where $F:\R^2\longrightarrow \R$ is defined by:
\[
F(x_1,x_2):=(x_1-1)^2x_2^2+(x_2^2-1)^2.
\] 
Then, $F$ does not satisfy condition $ii)$ in the Definition \ref{mon}, indeed 
\[
F_{12}(x_1,x_2)=4x_2(x_1-1).
\]
Moreover,
\begin{align*}
&F\in C^2(\bbR^2),\qquad F((1,1))=0, \qquad F((1,-1))=0, \quad F(\xi)>0 \quad\mbox{for } \xi\neq (1,1),(1,-1),\\
& \nabla^2F((1,1))\geq  I, \qquad \nabla^2F((1,-1))\geq  I\\
&\nabla F(\xi)\cdot \xi\geq 0\quad \mbox{for}\ |\xi|\geq R_0,\quad \mbox{for some}\ R_0>1,
\end{align*}
which, by \cite[Theorem $1.1$]{ABG}, imply that there exist entire solutions $(u^1,u^2)$ of (\ref{sistwell}) which are not one-dimensional.

\subsection{Minimizers and stable solutions}
We point out some conditions which ensure the validity of \eqref{assump1}. As mentioned in the introduction, the system in~\eqref{sistema} 
is associated to a suitable energy functional. Precisely, let us define
\[
\lambda^i_1(x,t):=\frac{\partial a_i}{\partial t}(x,t)t+a_i(x,t), \qquad \lambda^i_2(x,t):=a_i(x,t), \qquad i=1,\ldots, n,
\]
and
\[
\Lambda^i_2(x,t):=\int_0^t \lambda^i_2(x,|\tau|)\tau\, \ud\tau. 
\]
Then, it a is a matter of computations that the energy functional 
related to~\eqref{sistema} is 
\begin{align}\label{defint}
I_{\Omega}(u^1,\ldots, u^n):=\sum_{i=1}^n\int_{\Omega} \Lambda_2^i(x,|\nabla u^i|)-F(x,u^1,\ldots, u^n).
\end{align}

According to~\cite{CValp}, we give the following definition: 
\begin{defi}
A family $(u^1,\ldots, u^n)$ is said to be a local minimizer for $I_{\Omega}$ if, for any bounded open set $U\subset\Omega$, $I_{U}(u^1,\ldots, u^n)$ is well-defined and finite, and
\begin{align*}
I_{U}(u^1+\psi^1,\ldots, u^n+\psi^n)\geq I_{U}(u^1,\ldots, u^n)
\end{align*}
for any $(\psi^1,\ldots, \psi^n)\in C^{\infty}_c(U,\bbR^n)$.
\end{defi}

The following lemma is the exact counterpart for systems of the result 
proved in \cite[Lemma B.1]{CValp} for the case of one equation.
\begin{lem}
Let $\Omega\subset\bbR^N$ be an open set. If $(u^1,\ldots, u^n)$ is a local minimizer of $I_{\Omega}$, then $(u^1,\ldots, u^n)$ is a weak solution of \eqref{sistema} and satisfies \eqref{stab}. 
\end{lem}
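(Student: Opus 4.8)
The plan is to mimic the classical argument that a local minimizer of an elliptic energy functional is both a weak solution of the associated Euler--Lagrange equation and a stable critical point, adapting it componentwise to the present system. The two assertions---being a weak solution of \eqref{sistema} and satisfying \eqref{stab}---follow from computing the first and second variations of $I_\Omega$ along the single-component perturbations $(u^1,\ldots,u^i+s\psi^i,\ldots,u^n)$, and using that the energy decouples additively in the gradient terms (only the potential $F$ couples the components).

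First I would fix a bounded open set $U\subset\Omega$ with $\supp\psi^i\subset U$ for all $i$, and consider, for a fixed index $i$ and $s\in\bbR$ small, the function
\begin{align*}
g_i(s):=I_U(u^1,\ldots,u^{i-1},u^i+s\psi^i,u^{i+1},\ldots,u^n)
=\int_U \Lambda_2^i(x,|\nabla u^i+s\nabla\psi^i|)+\sum_{k\neq i}\int_U\Lambda_2^k(x,|\nabla u^k|)-\int_U F(x,\ldots,u^i+s\psi^i,\ldots).
\end{align*}
By local minimality $g_i$ has a minimum at $s=0$, so $g_i'(0)=0$ and $g_i''(0)\geq 0$ provided these derivatives exist and can be computed by differentiation under the integral sign. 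For the first derivative one uses $\frac{\partial}{\partial t}\Lambda_2^i(x,t)=\lambda_2^i(x,t)\,t=a_i(x,t)\,t$ together with $\frac{\partial}{\partial s}|\nabla u^i+s\nabla\psi^i|=\langle \nabla u^i+s\nabla\psi^i,\nabla\psi^i\rangle/|\nabla u^i+s\nabla\psi^i|$ to obtain at $s=0$ precisely $\int_U\langle a_i(x,|\nabla u^i|)\nabla u^i,\nabla\psi^i\rangle-\int_U F_i(x,u^1,\ldots,u^n)\,\psi^i=0$, which is \eqref{weak1}; since $i$ and $\psi^i$ were arbitrary, $(u^1,\ldots,u^n)$ is a weak solution. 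For the second derivative, differentiating once more and recalling the definition of $\mathcal A^i$, whose $(k,l)$ entry is $a_i\delta_{kl}+\frac{\partial a_i}{\partial t}\frac{\eta_k\eta_l}{|\eta|}$, one checks that $\frac{\partial^2}{\partial s^2}\Lambda_2^i(x,|\nabla u^i+s\nabla\psi^i|)\big|_{s=0}=\langle\mathcal A^i(x,\nabla u^i)\nabla\psi^i,\nabla\psi^i\rangle$, while the potential contributes $-\int_U F_{ii}(x,u^1,\ldots,u^n)(\psi^i)^2$; summing $g_i''(0)\geq0$ over $i$ and noting that the mixed terms $F_{ij}\psi^i\psi^j$ with $i\neq j$ never appear (each variation moves only one component) one must still recover the full quadratic form in \eqref{stab}. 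This last point is handled by testing instead with the joint perturbation $s\mapsto I_U(u^1+s\psi^1,\ldots,u^n+s\psi^n)$ and using that its second derivative at $0$ equals $\sum_i\int_U\langle\mathcal A^i\nabla\psi^i,\nabla\psi^i\rangle-\sum_{i,j}\int_U F_{ij}\psi^i\psi^j$, which is exactly the left-hand side of \eqref{stab}, hence nonnegative.

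The main obstacle is purely one of regularity and justification rather than of ideas: one must verify that $g_i$ (and the joint variation) is genuinely $C^2$ near $s=0$, i.e.\ that differentiation under the integral sign is legitimate. This is delicate because $\Lambda_2^i(x,\cdot)$ is only $C^1$ on $[0,\infty)$ and $\mathcal A^i$ can be singular where $\nabla u^i$ vanishes; the resolution is to use the regularity assumptions \eqref{regassump} (so $u^i\in C^1\cap W^{1,2}_{loc}$ and $|\nabla u^i|$ is bounded on $U$), the hypothesis that $t\mapsto a_i(x,t)$ is $C^1$ on $(0,+\infty)$ with $a_i$ and---via \eqref{Alim}---$\mathcal A^i(x,\nabla u^i)$ locally bounded, together with the $C^2$ regularity of $\xi^i\mapsto F(x,\ldots,\xi^i,\ldots)$ and the boundedness of the $u^k$, to produce an integrable dominating function; near the singular set $\{\nabla u^i=0\}$ one argues as in \cite[Lemma B.1]{CValp}, splitting the integral and exploiting that the integrand of the second variation, when written through $\mathcal A^i$, extends to an $L^\infty_{loc}$ function there. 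Once these dominations are in place the argument is a direct transcription of the scalar proof of \cite{CValp}, and I would simply refer to that source for the technical estimates and spell out only the system-specific bookkeeping of the potential term $F$.
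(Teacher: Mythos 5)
Your proposal is correct and takes essentially the same route as the paper: the weak formulation is obtained from the vanishing first variation under single-component perturbations, and \eqref{stab} from the nonnegativity of the second variation along the joint perturbation $(u^1+s\psi^1,\ldots,u^n+s\psi^n)$, whose second derivative at $s=0$ is exactly $\sum_i\int\langle\mathcal{A}^i\nabla\psi^i,\nabla\psi^i\rangle-\sum_{i,j}\int F_{ij}\psi^i\psi^j$. Your additional remarks on justifying differentiation under the integral sign only elaborate on what the paper carries out formally, so there is no discrepancy of substance.
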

\begin{proof}
We start proving that every local minimizer $u=(u^1,\ldots, u^n)$ of $I_{\Omega}$ is a weak solution of \eqref{sistema}. To this end, let $U\subset \Omega$ be open and bounded and consider $\psi\in C^{\infty}_c(U)$. Then,
for every $i=1,\ldots, n$, we get
\begin{align*}
	\frac{\ud }{\ud s}\Big|_{s=0}I_U (u^1,\ldots,u^i+s\psi,\ldots, u^n)=0,
\end{align*}
and, recalling the definition of~$I_{U}$ in~\eqref{defint}, 
\begin{align*}
		\int_{\Omega}a_i(x,|\nabla u^i|(X))\left\langle \nabla u^i(X),\nabla\psi(X)\right\rangle \ud X=\int_{\Omega} F_{i}(x,u^1,\ldots, u^n)\, \psi(X)\, \ud X,
\end{align*}
which is the first part of the thesis. 

Finally, for every $\psi=(\psi^1,\ldots, \psi^n)\in C^{\infty}_c(U,\bbR^n)$,
\begin{equation*}\begin{split}
0&\leq \frac{\ud^2 }{\ud s^2}\Big|_{s=0}I_U (u^1+s\psi^1,\ldots, u^n+s\psi^n)\\
&=\sum_{i=1}^n\frac{\ud}{\ud s}\Big|_{s=0} \int_{\Omega}\Big(a_i(x,|\nabla u^i+s\nabla\psi^i|(X))\left\langle \nabla u^i+s\nabla \psi^i, \nabla\psi^i\right\rangle \\
&\qquad\qquad -F_{i}(x,u^1+s\psi^1,\ldots, u^n+s\psi^n)\, \psi^i(X)\Big)\ud X\\
& =\sum_{i=1}^n\int_{\Omega}\left\langle \mathcal{A}^i(x,\nabla u^i(X))\nabla \psi^i(X), \nabla \psi^i(X)\right\rangle\ud X \\
&\qquad -\sum_{i,j=1}^n\int_{\Omega} F_{ij}(x,u^1,\ldots, u^n)\, \psi^i(X)\, \psi^j(X)\, \ud X,
\end{split}\end{equation*}
and the proof is accomplished. 
\end{proof}

In the following proposition we give a sufficient condition for the 
assumption~\eqref{assump1} to hold for local minimizers.
\begin{prop}\label{port}
Let $N\leq 3$, and assume that, for each $i=1,\ldots,n$, there exists $C_i>0$ such that
\begin{align}
\label{sign}
&\lambda_1^i(x,t)>0, \qquad  \forall  x\in\bbR^m, t\in (0,+\infty),\\
\label{stimasop}
&\lambda_1^i(x,t)t^2,\lambda_2^i(x,t)t^2\leq C_i \Lambda_2^i(x,t), \qquad \forall  x\in\bbR^m, t\in (0,C_i].
\end{align}
Moreover,  we  assume that for all $x\in\bbR^m$, and $s,t\in [0,+\infty)$
\begin{align}
\label{zero}
&\Lambda_2^i(x,s)\geq 0, \\
\label{sub}
&\Lambda_2^i(x,s+t)\leq \bar C_i\Big[\Lambda_2^i(x,s)+\Lambda_2^i(x,t)\Big], \\
\label{cre}
&\Lambda_2^i(x,s)\leq \alpha_i(x)g_i(s),
\end{align}
for some $\bar{C}_i>0$, $\alpha_i\in L^{\infty}_{loc}(\bbR^m)$ and $g_i:[0,+\infty)\longrightarrow \bbR$ monotone increasing.
Finally, for all $x\in\bbR^m$ and $\xi\in\bbR^n$, we suppose that the following holds
\begin{align}\label{segno}
F(x,\xi)\leq 0,
\end{align} 
\begin{align}\label{cond}
F(x,\xi)=0, \qquad  \forall x\in\bbR^m,\  \forall \xi\in\bbS^{n-1}, 
\end{align}
and 
\begin{align}\label{lim}
\sup_{x\in\bbR^m, |\xi|\leq 1}|F(x,\xi)|\leq +\infty.
\end{align}

Then, assumption~\eqref{assump1} is satisfied for every local minimizer $(u^1,\ldots, u^n)$ of $I_{\bbR^N}$ such that 
$|u^i|+|\nabla u^i|\leq M$, $i=1,\ldots,n$.
\end{prop}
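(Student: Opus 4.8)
The goal is to verify the growth bound \eqref{assump1} for a local minimizer $(u^1,\ldots,u^n)$ of $I_{\bbR^N}$ with $|u^i|+|\nabla u^i|\le M$. Since $\mathcal{A}^i(x,\nabla u^i(X))$ has largest eigenvalue $\overline{\mathcal{A}}^i(X)$ controlled pointwise by $\max\{\lambda_1^i(x,|\nabla u^i(X)|),\lambda_2^i(x,|\nabla u^i(X)|)\}$ (the eigenvalues of $\mathcal{A}^k(x,\eta)$ are $\lambda_2^k(x,|\eta|)=a_k$ with multiplicity $N-1$ and $\lambda_1^k(x,|\eta|)=a_k+\frac{\partial a_k}{\partial t}|\eta|$ in the radial direction), assumption \eqref{stimasop} gives, on the set $\{|\nabla u^i|\le C_i\}$,
\[
\overline{\mathcal{A}}^i(X)\,|\nabla u^i(X)|^2\le C_i\,\Lambda_2^i(x,|\nabla u^i(X)|).
\]
Because $|\nabla u^i|\le M$ everywhere, after possibly enlarging $C_i$ (replacing it by $\max\{C_i,M\}$, which is legitimate since we only need the conclusion for $R\ge\max_j C_j$) this pointwise inequality holds on all of $B_R$. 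Hence
\[
\int_{B_R}\overline{\mathcal{A}}^i(X)\,|\nabla u^i(X)|^2\,\ud X\le C_i\int_{B_R}\Lambda_2^i(x,|\nabla u^i|)\,\ud X,
\]
and the whole problem reduces to proving the energy estimate
\[
\sum_{i=1}^n\int_{B_R}\Lambda_2^i(x,|\nabla u^i|)\,\ud X\le C R^2
\]
for $R$ large, i.e.\ that the (positive part of the) Dirichlet-type energy of a minimizer grows at most quadratically.

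**The energy estimate via a competitor.** This is the heart of the argument and follows the scalar blueprint of \cite{CValp}. Fix $R$ large and compare $(u^1,\ldots,u^n)$ on the ball $B_R$ with a competitor that is radially interpolated down to a reference value on the annulus $B_R\setminus B_{R-1}$. Concretely, choose $\xi_i\in\bbS^{n-1}$-type constants — more precisely, using \eqref{cond} pick a unit vector $e\in\bbS^{n-1}$ so that $F(x,e)=0$ for all $x$, and set $v^i(X):=(1-\tau(|X|))u^i(X)+\tau(|X|)\,\lambda(X)e_i$, where $\tau$ is a Lipschitz cutoff equal to $0$ on $B_{R-1}$ and $1$ on $\partial B_R$ and $\lambda$ is chosen (continuously, of modulus $\le 1$ near $\partial B_R$, say $\lambda=|u(X)|/M$ clamped into $[0,1]$) so that $v=u$ on $\partial B_R$ makes $v-u\in C_c$-admissible after mollification. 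By minimality, $I_{B_R}(u)\le I_{B_R}(v)$. On $B_{R-1}$ the two functionals agree, so
\[
\sum_i\int_{B_{R-1}}\Lambda_2^i(x,|\nabla u^i|)-\int_{B_{R-1}}F(x,u)\le \sum_i\int_{B_R\setminus B_{R-1}}\Lambda_2^i(x,|\nabla v^i|)-\int_{B_R\setminus B_{R-1}}F(x,v)+\Big[\text{agreeing }B_{R-1}\text{ terms}\Big].
\]
On the annulus $B_R\setminus B_{R-1}$: the gradient of $v^i$ is a bounded combination of $\nabla u^i$, $\dot\tau\cdot(u^i-\lambda e_i)$ and $\tau\nabla(\lambda e_i)$, all uniformly bounded by $M$ and by $\|\dot\tau\|_\infty\le C$, so $|\nabla v^i|\le CM$ there; by \eqref{cre} (with $\alpha_i\in L^\infty_{loc}$ and $g_i$ monotone) $\Lambda_2^i(x,|\nabla v^i|)\le \alpha_i(x)g_i(CM)\le C'$, and the annulus has volume $\le CR^{N-1}$. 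Using \eqref{sub} to split $\Lambda_2^i(x,|\nabla v^i|)$ and absorb the "$u$" contribution, together with \eqref{zero} and the fact that on $B_{R-1}$ the $\Lambda_2^i$ terms cancel exactly, gives
\[
\sum_i\int_{B_{R-1}}\Lambda_2^i(x,|\nabla u^i|)\le C R^{N-1}+\Big|\int_{B_R}F(x,u)-F(x,v)\Big|.
\]
The $F$-term: on $B_{R-1}$ it cancels; on the annulus $|F(x,u)|,|F(x,v)|\le \sup_{x,|\xi|\le M}|F(x,\xi)|<\infty$ by \eqref{lim} (after trivially rescaling the hypothesis, which is stated for $|\xi|\le 1$, to $|\xi|\le M$ using $C^2$-ness of $F$), so this too is $\le CR^{N-1}$. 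Hence $\sum_i\int_{B_{R-1}}\Lambda_2^i(x,|\nabla u^i|)\le CR^{N-1}$; replacing $R$ by $R+1$ yields $\sum_i\int_{B_R}\Lambda_2^i(x,|\nabla u^i|)\le CR^{N-1}$, and since $N\le 3$ we have $N-1\le 2$, so this is $\le CR^2$ for $R\ge 1$. Combined with the reduction of the first paragraph, \eqref{assump1} follows with the eigenvalue integral bounded by $(\max_i C_i)\cdot C\,R^2$.

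**Where the difficulty lies.** The routine part is the algebra bounding $|\nabla v^i|$ and invoking \eqref{zero},\eqref{sub},\eqref{cre},\eqref{lim} on the thin annulus — this is mechanical. The genuinely delicate point, exactly as in \cite[Prop.~B.2]{CValp}, is constructing an \emph{admissible} competitor: one needs $v^i-u^i\in C^\infty_c$ (or at least $W^{1,\infty}_0$, then mollify) while keeping $|v^i|\le M$ and $|\nabla v^i|$ bounded, and one must make sure the interpolation lands in the zero set of $F$ on $\bbS^{n-1}$ — this is precisely what \eqref{cond} (namely $F(x,\xi)=0$ for $\xi\in\bbS^{n-1}$) and \eqref{segno} ($F\le 0$) are designed to permit, so that trading the true solution for the constant profile $\lambda e$ does not increase the energy by more than a boundary-layer term. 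A secondary subtlety is that \eqref{stimasop} only controls $\overline{\mathcal{A}}^i|\nabla u^i|^2$ by $\Lambda_2^i$ on $\{|\nabla u^i|\le C_i\}$; the uniform bound $|\nabla u^i|\le M$ is what lets us dispense with this restriction after enlarging $C_i$, and one should state explicitly that the constants $C_1,\ldots,C_n$ appearing in \eqref{assump1} may be taken to be $\max\{C_i,M,1\}$ so that the hypothesis "for $R\ge\max\{C_1,\ldots,C_n\}$" is met. With these two points handled, the proof closes.
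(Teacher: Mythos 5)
Your opening reduction is exactly the paper's: diagonalize $\mathcal{A}^i$ (eigenvalues $\lambda_2^i$ and $\lambda_1^i$), use \eqref{stimasop} together with $|\nabla u^i|\leq M$ to get the pointwise bound $\overline{\mathcal{A}}^i(X)|\nabla u^i(X)|^2\leq C\,\Lambda_2^i(x,|\nabla u^i(X)|)$, and so reduce \eqref{assump1} to the energy estimate $\sum_i\int_{B_R}\Lambda_2^i(x,|\nabla u^i|)\leq CR^{N-1}\leq CR^2$ for $N\leq 3$. The gap is in the comparison step, which is the heart of the proof. Your competitor $v^i=(1-\tau)u^i+\tau\lambda e_i$, with $\tau=0$ on $B_{R-1}$ and $\tau=1$ on $\partial B_R$, is built backwards: it coincides with $u$ in the interior ball and equals $\lambda e$ at $\partial B_R$, so $v-u$ is not compactly supported in $B_R$, and no scalar field $\lambda(X)$ can make the vector $\lambda(X)e$ coincide with $u(X)$ on $\partial B_R$ (your ``clamped $\lambda=|u|/M$'' does not restore admissibility). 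Even waiving admissibility, since $v\equiv u$ on $B_{R-1}$ the inequality $I_{B_R}(u)\leq I_{B_R}(v)$ only compares the two energies on the annulus: the $B_{R-1}$ contributions cancel from both sides, so your displayed inequality with $\sum_i\int_{B_{R-1}}\Lambda_2^i(x,|\nabla u^i|)$ on the left is a non sequitur — what you actually obtain is the vacuous statement that the annulus energy of $v$ is nonnegative. The correct orientation, which is what the paper does (there with the truncated competitor $v^i=\min\{u^i,\psi^i\}$, $\psi^i$ constant in $B_{R-1}$), is to put the reference state in the \emph{interior}: take $v\equiv e\in\bbS^{n-1}$ constant on $B_{R-1}$, interpolate to $u$ across the annulus, and keep $v=u$ near and outside $\partial B_R$; then $\Lambda_2^i(x,|\nabla v^i|)=\Lambda_2^i(x,0)=0$ and $F(x,v)=F(x,e)=0$ on $B_{R-1}$ by \eqref{cond}, and \eqref{segno}, \eqref{zero}, \eqref{sub}, \eqref{cre}, \eqref{lim} bound $I_{B_R}(v)$ by the measure of the annulus, i.e.\ $CR^{N-1}$, which is the estimate you wanted.

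Two further points. First, scaling the reference by $\lambda(X)\in[0,1]$ is not harmless: \eqref{cond} kills $F$ only on the unit sphere, so $F(x,\lambda e)$ need not vanish, and the bulk term $-\int F(x,\lambda e)$ would only be signed (by \eqref{segno}) in the wrong direction for your inequality; the interpolation must land exactly on $\bbS^{n-1}$ in the bulk. Second, your claim that \eqref{lim} can be ``rescaled'' from $|\xi|\leq 1$ to $|\xi|\leq M$ by the $C^2$ regularity of $F$ is unjustified: \eqref{lim} is a bound uniform in $x\in\bbR^m$, and continuity in $\xi$ gives no uniformity in $x$ for $1<|\xi|\leq M$; this is precisely why the competitor should be arranged to take values in a fixed bounded set (the cube $Q=[-1,1]^n$ in the paper) on the transition region, rather than in the full range of $u$. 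With the cutoff flipped and these two adjustments, your argument closes and coincides in substance with the paper's proof.
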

\begin{proof}
We start observing that, for each $i=1,\ldots, n$, and $x\in\bbR^m$, 
if $\xi,v\in\bbR^N$ with $|v|\leq 1$ and $|\xi|\leq M$, then
\begin{align}\label{disuglam}
|\xi|^2\left\langle \mathcal{A}^i(x,\xi)v,v\right\rangle \leq C_M \Lambda_2^i(x,|\xi|).
\end{align}
Indeed, by a simple calculation we get
\begin{align*}
|\xi|^2\left\langle \mathcal{A}^i(x,\xi)v,v\right\rangle&=\frac{\partial a_i}{\partial t}(x,|\xi|)|\xi|\left\langle \xi,v\right\rangle^2+a_i(x,|\xi|)|v|^2|\xi|^2\\
\nonumber
&=\lambda_1^i(x,|\xi|)\left\langle \xi, v\right\rangle^2+\lambda_2^i(x,|\xi|)\Big[|v|^2|\xi|^2-\left\langle \xi, v\right\rangle^2\Big]\\
\nonumber
&\leq\Big(\lambda_1^i(x,|\xi|)+\lambda_2^i(x,|\xi|)\Big)|v|^2|\xi|^2\\
\nonumber
&\leq  C_M \Lambda_2^i(x,|\xi|),
\end{align*}
where in the last inequality we have used \eqref{stimasop} and the fact that $|v|\leq 1$.

Let $R>1$ and take $\psi=(\psi^1,\ldots,\psi^n)\in (C^{\infty}_c(\bbR^N))^n$ with the property that, for each $i=1,\ldots, n$, $\psi^i=-1$ in $B_{R-1}$, $\psi^i=1$ on $\partial B_R$ and $|\nabla\psi^i|\leq M$ in $B_R\setminus B_{R-1}$. Let us define $$v^i(X):=\min\{u^i(X),\psi^i(X)\}, \qquad i=1,\ldots, n,$$
and observe that, by the minimality of $u$ and \eqref{segno},
\begin{align*}
\sum_{i=1}^n\int_{B_R}\Lambda_2^i(x,|\nabla u^i|)\leq I_{B_R}(u^1,\ldots, u^n)\leq I_{B_R}(v^1,\ldots, v^n). 
\end{align*}
By \eqref{zero}, \eqref{cond},\eqref{sub} and \eqref{lim} we get
\begin{align*}
\sum_{i=1}^n\int_{B_R}\Lambda_2^i(x,|\nabla u^i|)&\leq \int_{B_R\setminus B_{R-1}}\sum_{i=1}^n\Lambda_2^i(x,|\nabla v^i|)-F(x,v^1,\ldots, v^n)\\
&\leq \max\{\bar C_i\} \int_{B_R\setminus B_{R-1}}\sum_{i=1}^n\Big(\Lambda_2^i(x,|\nabla u^i|)+\Lambda_2^i(x,|\nabla \psi^i|)\Big)+\sup_{\bbR^m\times Q} |F|,
\end{align*}
where $Q:=[-1,1]\times \cdots \times[-1,1]$ is the cube in $\bbR^n$. 
Using \eqref{cre}, we obtain
\begin{align}\label{disugR}
\sum_{i=1}^n\int_{B_R}\Lambda_2^i(x,|\nabla u^i|)&\leq \max\{\bar C_i\} \int_{B_R\setminus B_{R-1}} \sum_{i=1}^n\alpha_i(x)\Big(g_i(|\nabla u^i|)+g_i(|\nabla \psi^i|)\Big)+\sup_{\bbR^m\times Q} |F|\\
	\nonumber
&\leq 2\max\{\bar C_i\}\int_{B_R\setminus B_{R-1}} \sum_{i=1}^n g_i(M)\sup_x \alpha_i +\sup_{\bbR^m\times Q} |F|\\
	\nonumber
&\leq CR^{N-1},
\end{align}
for some $C>0$. Finally, thanks to \eqref{disuglam}, \eqref{disugR} and the fact that $\mathcal{A}^i$ is positive definite, we have
\begin{align*}
\int_{B_R}|\nabla u^i|^2\left\langle \mathcal{A}^i(x,\nabla u^i)v,v\right\rangle\leq CR^{N-1},
\end{align*}
and, taking as $v$ the normalized eigenvector corresponding to $\overline{\mathcal{A}}^i$, 
we get the thesis.
\end{proof}

We conclude this Appendix providing an example of functional which satisfies the hypotheses in 
Proposition~\ref{port}, and hence the assumption~\eqref{assump1}, 
obtaining, from Theorem~\ref{teoprinc}, the one-dimensional symmetry for 
local minimizers. 
\begin{cor}
Let $N\leq3$, and let $\alpha\in L^{\infty}(\bbR^m)$ be strictly positive, 
and $F\in C^{2}(\bbR^2)$ such that $G:=\alpha F$ satisfies \eqref{segno}-\eqref{lim}. 
Suppose that $F_{12}$ does not change sign. 

For every $p_1,p_2\in (1,3)$, let us define the functional
\begin{align*}
	I_{\bbR^N}:=\int_{\bbR^N}|\nabla u^1(X)|^{p_1}+|\nabla u^2(X)|^{p_2}-\alpha(x)F(u^1(X),u^2(X))\ud X.
\end{align*}

Then, for every local minimizer $(u^1,u^2)$ of $I_{\bbR^N}$ such that $u^1\in W^{1,p_1}(\bbR^N)\cap L^{\infty}(\bbR^N)$, $u^2\in W^{1,p_2}(\bbR^N)\cap L^{\infty}(\bbR^N)$,  with $|\nabla u^1|,|\nabla u^2|\in L^{\infty}(\bbR^N)$, 
there exist $u^1_0,u^2_0:\bbR^m\times \bbR\longrightarrow\bbR$ 
and $\omega_1,\omega_2:\bbR^m\to\bbS^{N-m-1}$ such that 
	\[u^1(x,y)=u^1_0(x,\left\langle\omega_1(x), y\right\rangle),\qquad u^2(x,y)=u^2_0(x,\left\langle\omega_2(x), y\right\rangle),
\]
for any $(x,y)\in\bbR^m\times\bbR^{N-m}$. Moreover, $\omega_i$, $i=1,2$, is constant in any connected component of 
$\left\lbrace \nabla_y u^i\neq 0\right\rbrace$.
\end{cor}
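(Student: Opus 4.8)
The plan is to recognize $I_{\bbR^N}$ as a concrete instance of the general functional~\eqref{defint}, and then to apply Theorem~\ref{teoprinc}, using Proposition~\ref{port} to produce the quantitative hypothesis~\eqref{assump1}. First I would make the identification explicit: taking $a_i(x,t):=p_i t^{p_i-2}$ for $i=1,2$ and letting the potential appearing in~\eqref{sistema} be $G(x,\xi^1,\xi^2):=\alpha(x)F(\xi^1,\xi^2)$, a direct computation gives $\lambda_1^i(x,t)=p_i(p_i-1)t^{p_i-2}$, $\lambda_2^i(x,t)=p_i t^{p_i-2}$ and $\Lambda_2^i(x,t)=t^{p_i}$, so that the functional in the statement is exactly $I_{\bbR^N}(u^1,u^2)$ in the sense of~\eqref{defint}, and its Euler--Lagrange system is~\eqref{sistema} for this choice of $a_i$ and $G$. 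The structural hypotheses of Section~1 hold for free: the $a_i$ are bounded on $\bbR^m\times[\alpha_-,\alpha_+]$, strictly positive for each fixed $t>0$, and $C^1$ in $t\in(0,+\infty)$, because $p_i>1$, while the map $\xi^i\mapsto G(x,\xi^1,\xi^2)$ belongs to $C^2(\bbR)$ since $F\in C^2(\bbR^2)$. Moreover $\mathcal{A}^i(x,\eta)$ has eigenvalue $p_i(p_i-1)|\eta|^{p_i-2}$ in the direction of $\eta$ and eigenvalue $p_i|\eta|^{p_i-2}$ on the orthogonal hyperplane, both strictly positive on $\{\eta\neq0\}$ (this is the positive definiteness needed in Theorem~\ref{teoprinc}), and $\overline{\mathcal{A}}^i(X)|\nabla u^i(X)|^2=p_i|\nabla u^i(X)|^{p_i}$, which is bounded since $\nabla u^i\in L^\infty$.

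Next I would dispose of~\eqref{regassump}: $u^i\in L^\infty$ and $\nabla u^i\in L^\infty$ are assumed, the interior $C^1(\bbR^N)\cap C^2(\{\nabla u^i\neq0\})$ regularity follows from the quasilinear regularity theory for equations of $p_i$-growth recalled in the Appendix (the right-hand side $\alpha F_i(u^1,u^2)$ being bounded), and the local $W^{1,2}$ regularity of $\nabla u^i$ --- which is exactly where the restriction $p_i<3$ enters --- follows from~\cite{Tolk,DS}; then~\eqref{Alim} holds on $\{\nabla u^i\neq0\}\cap B_R$, the degeneracy of $\mathcal{A}^i$ as $\nabla u^i\to0$ (in the range $p_i<2$) being compensated exactly as in~\eqref{st}. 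At this point the Lemma on local minimizers proved above gives that $(u^1,u^2)$ is a weak solution of~\eqref{sistema} satisfying~\eqref{stab}.

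Then I would check the hypotheses of Proposition~\ref{port} with $n=2$ and $N\leq3$: condition~\eqref{sign} is $p_i(p_i-1)t^{p_i-2}>0$, true for $p_i>1$; \eqref{stimasop} reduces to $p_i(p_i-1)t^{p_i}\leq C_i t^{p_i}$ and $p_i t^{p_i}\leq C_i t^{p_i}$, so $C_i:=\max\{1,p_i(p_i-1),p_i\}$ works; \eqref{zero} is immediate; \eqref{sub} is the elementary inequality $(s+t)^{p_i}\leq 2^{p_i-1}(s^{p_i}+t^{p_i})$; \eqref{cre} holds with $\alpha_i\equiv1\in L^{\infty}_{loc}(\bbR^m)$ and $g_i(s):=s^{p_i}$ increasing; and \eqref{segno}, \eqref{cond}, \eqref{lim} are satisfied by $G=\alpha F$ by hypothesis. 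Hence Proposition~\ref{port} yields~\eqref{assump1}. It remains to produce the functions of~\eqref{assumdermiste}: since $G_{12}=\alpha F_{12}$ with $\alpha>0$ strictly positive and $F_{12}$ of constant sign, $G_{12}$ has constant sign on $\bbR^m\times\bbR^2$, so~\eqref{assumdermiste} holds with $\theta^1\equiv1$ and $\theta^2\equiv1$ if $F_{12}\geq0$, or $\theta^2\equiv-1$ if $F_{12}\leq0$, both non-zero constants lying in $C^1(\bbR^N)$. All the hypotheses of Theorem~\ref{teoprinc} are now in force, and its conclusion yields $u^i(x,y)=u^i_0(x,\langle\omega_i(x),y\rangle)$ together with the constancy of $\omega_i$ on each connected component of $\{\nabla_y u^i\neq0\}$, which is the claim.

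The step I expect to require the most care is not the application of Theorem~\ref{teoprinc} nor the arithmetic in Proposition~\ref{port}, but the verification of~\eqref{regassump} and~\eqref{Alim} over the whole range $p_i\in(1,3)$: for $p_i\in(1,2)$ the operator is singular, $\mathcal{A}^i(x,\eta)$ is unbounded as $\eta\to0$, and the $W^{1,2}_{loc}$-regularity of $\nabla u^i$ is itself a non-trivial consequence of the fine theory for degenerate and singular quasilinear equations invoked in the Appendix (which is precisely why one keeps $p_i<3$, and $N\leq3$ for Proposition~\ref{port}). Once those regularity facts are in hand, the remaining steps are routine.
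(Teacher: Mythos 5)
Your proposal is correct and follows essentially the same route as the paper: identify $\Lambda_2^i(x,t)=t^{p_i}$, check the hypotheses \eqref{sign}--\eqref{lim} of Proposition~\ref{port} to obtain \eqref{assump1}, verify positive definiteness of $\mathcal{A}^1,\mathcal{A}^2$, and apply Theorem~\ref{teoprinc} (your explicit choice of constant $\theta^i$ for \eqref{assumdermiste} and the regularity discussion via \cite{Di,Le,Tolk,S,DS} are just the details the paper leaves implicit in the Appendix). The only slight discrepancy is your claim that the degeneracy of $\mathcal{A}^i$ for $p_i\in(1,2)$ is ``compensated as in \eqref{st}'': the paper instead handles \eqref{Alim} by citing \cite{CVal}, noting it holds for $p_i\geq 2$ and, for $1<p_i<2$, only under the proviso $\{\nabla u^i=0\}=\varnothing$.
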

\begin{proof}
The proof easily follows from Theorem \ref{teoprinc} and Proposition \ref{port} . Indeed,
$$\Lambda_2^1(x,t)=|t|^{p_1}, \qquad \Lambda_2^2(x,t)=|t|^{p_2}$$ 
satisfy conditions \eqref{zero}, \eqref{sub}, \eqref{cre} and 
\begin{equation*}\begin{split}
&\lambda_1^1(x,t)=(p_1-1)|t|^{p_1-2}, \qquad \lambda_1^2(x,t)=(p_2-1)|t|^{p_2-2}, \\ &\lambda_2^1(x,t)=|t|^{p_1-2}, \qquad \lambda_2^2(x,t)=|t|^{p_2-2}
\end{split}\end{equation*} 
satisfy \eqref{sign} and \eqref{stimasop}. Moreover, as proved in \cite{CVal}, both $\mathcal{A}^1,\mathcal{A}^2$ are positive definite for every $p_1,p_2>1$ and satisfy \eqref{Alim} when $p_1,p_2\geq 2$, and even for $p_1,p_2>1$ as long as $\{\nabla u^1=0\}=\{\nabla u^2=0\}=\varnothing$.
\end{proof}

\section*{Acknowledgments} The authors would like to thank \emph{Enrico Valdinoci} 
for his very deep and useful comments and suggestions.

\end{document}